\newtheorem*{lemma*}{Lemma}
\newtheorem{theoremi}{Theorem}
\newtheorem{corollaryi}[theoremi]{Corollary}
\newtheorem{remarki}[theoremi]{Remark}
\newtheorem{theorem}{Theorem}[section]
\newtheorem{lemma}[theorem]{Lemma}
\newtheorem{corollary}[theorem]{Corollary}
\renewcommand*{\d}{{\mathop{}\!\mathrm{d}}}
\def\Xint#1{\mathchoice
{\XXint\displaystyle\textstyle{#1}}%
{\XXint\textstyle\scriptstyle{#1}}%
{\XXint\scriptstyle\scriptscriptstyle{#1}}%
{\XXint\scriptscriptstyle\scriptscriptstyle{#1}}%
\!\int}
\def\XXint#1#2#3{{\setbox0=\hbox{$#1{#2#3}{\int}$}
\vcenter{\hbox{$#2#3$}}\kern-.5\wd0}}
\def\R{{\mathbb R}}
\def\n{{\mathbf n}}
\DeclareMathOperator{\Range}{Range}
\DeclareMathOperator{\sinc}{sinc}
\DeclareMathOperator{\sgn}{sgn}
\def\Xint#1{\mathchoice
   {\XXint\displaystyle\textstyle{#1}}%
   {\XXint\textstyle\scriptstyle{#1}}%
   {\XXint\scriptstyle\scriptscriptstyle{#1}}%
   {\XXint\scriptscriptstyle\scriptscriptstyle{#1}}%
   \!\int}
\def\XXint#1#2#3{{\setbox0=\hbox{$#1{#2#3}{\int}$}
     \vcenter{\hbox{$#2#3$}}\kern-.5\wd0}}
\def\dashint{\Xint-}
\def\cut#1#2{\operatorname{cut}_#1(#2)}
\title[Brownian windings, Stochastic Green's formula and inhomo. magnetic impurities]{Brownian windings, Stochastic Green's formula and inhomogeneous magnetic impurities}
\author{Isao Sauzedde }
\address{ENS de Lyon}
\email{isao.sauzedde@ens-lyon.fr}
\keywords{}
\subjclass[2020]{Primary 60J65; 60K37 Secondary 60G17}
\begin{document}
\begin{abstract}
We give a general Green formula for the planar Brownian motion, which we apply to study the Aharonov--Bohm effect induced by Poisson distributed magnetic impurities on a Brownian electron in the presence of an inhomogeneous magnetic field.
\end{abstract}
\maketitle

\setcounter{tocdepth}{1}
\tableofcontents

\section{Introduction}
\subsection{Stochastic Green's formula}
For a smooth loop $X=(X^1,X^2):[0,T]\to \mathbb{R}^2$ and a point $z$ outside the range of $X$, let $\n_X(z)\in \mathbb{Z}$ be the winding index of $X$ around $z$. For any smooth differential $1$-form $\eta=\eta_1\d x^1+\eta_2\d x^2$, the Green formula states\footnote{Green formula is usually stated for the particular case of loops with no self-intersections and run through counterclockwise. Then, the winding function $\n_X$ is nothing but the indicator function of the bounded connected component $D$ delimited by $X$, so the right-hand side of~\eqref{eq:green} reduces to $\int_D \d \eta$.}  that
\begin{equation}
\label{eq:green}
\int_X \eta = \int_{\mathbb{R}^2} \n_X \d \eta,
\end{equation}
where $\d \eta=(\partial_1 \eta_2(z)-\partial_2 \eta_1(z) ) \d z$ is the exterior derivative of $\eta$.
In other words, for two smooth functions $\eta_1,\eta_2:\mathbb{R}^2\to \mathbb{R}$,
\[
\int_0^T \eta_1(X_t) \d X^1_t+\int_0^T \eta_2(X_t) \d X^2_t = \int_{\mathbb{R}^2} \n_X(z) (\partial_1 \eta_2(z)-\partial_2 \eta_1(z) ) \d z.
\]
Consider now the case of a Brownian loop.
A natural candidate for the left-hand side is given by the Stratonovich integrale of $\eta$ along $X$. For the right-hand side however, the index function $\n_X$ blows up slightly too fast in the vicinity of $X$, which prevents it from being integrable \cite{Werner3}. The goal of this paper is to prove that this integral admits a sort of principal value, for which the Green's formula holds true.

The method we use to define this principal value is we cut-off the extreme values of $\n_X(z)$, in a way which is symmetric (i.e. the positive and negative cut-off are identical), and let the cut-off goes to infinity \emph{outside from the integral}. In this way, the can take advantage of some cancellation between the positive and negative contributions to the integral. Roughly speaking, there is about as much points $z$ for which $\n_X(z)$ is larger than a given $k$ than points $z$ for which $\n_X(z)$ is smaller than $-k$: as $k$ goes to infinity, the difference between the size of these two sets is much smaller than the size of any one of these two sets. In fact, this is true not only when the size is computed with respect to the Lebesgue measure, but with respect to \emph{any smooth enough measure}, in particular for the measure $(\partial_1 \eta_2(z)-\partial_2 \eta_1(z) ) \d z$.

We shall consider two different ways to cut-off extreme values:
for $x\in \mathbb{R}$ and $N>0$, we define $\cut{N}{x}\coloneqq \max( \min(x,N ),-N)$, but the results also hold when $\cut{N}{x}$ is replaced with  $x \mathbbm{1}_{|x|\leq N}$. For $\beta\in(0,1)$, let
$\mathcal{C}_2^{\beta}(\mathbb{R}^2)$ be the set of bounded function which are $\beta$-H\"older continuous and square-integrable, and $\mathcal{C}^{1+\beta}_2(\mathbb{R}^2)$ be the set of $\mathcal{C}^{1}$ functions $g$ such that $\partial_1 g,\partial_2 g\in \mathcal{C}_2^{\beta}(\mathbb{R}^2)$.
Let finally $\mathcal{C}^{1+\beta}_2(\mathbb{R}^2,\mathbb{R}^2)$ be the set of differential $1$-forms $\eta=\eta_1\d x+\eta_2\d y$ such that $\eta_1,\eta_2\in\mathcal{C}^{1+\beta}_2(\mathbb{R}^2)$.

\begin{theoremi}
  \label{th:green}
  Let $X:[0,T]\to \mathbb{R}^2$ be a Brownian motion, and let $\n_X$ be the winding function associated with the loop obtained by concatenation of $X$ with the straight line segment $[X_T,X_0]$ between its endpoints.
  Then, almost surely, for all $f\in \mathcal{C}_2^{\beta}(\mathbb{R}^2)$ for some $\beta>0$,
  \[
  \int_{\mathbb{R}^2} \cut{N}{\n_X(z)}  f(z) \d z\]
  converges as $N\to \infty$. Define $  \dashint_{\mathbb{R}^2} \n_X  f \d \lambda$ as the almost sure limit.

  Furthermore, if $\eta\in \mathcal{C}^{1+\beta}_2(\mathbb{R}^2,\mathbb{R}^2)$ is such that $f=\partial_1\eta_2-\partial_2\eta_1$, then, almost surely,
  \begin{equation}
  \label{eq:thgreensto}
  \int_0^T \eta\circ \d X + \int_{[X_T,X_0]} \eta=  \dashint_{\mathbb{R}^2} \n_X  f \d \lambda ,
  \end{equation}
  where the stochastic integral in the left-hand side is to be understood in the sense of Stratonovich.
\end{theoremi}
\begin{corollaryi}
  \label{coro:green}
  For all $x$ and $y$ in $\mathbb{R}^2$, the same result holds if the planar Brownian motion is replaced with a planar  Brownian loop or a planar Brownian bridge between distinct points.
\end{corollaryi}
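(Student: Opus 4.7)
The plan is to reduce both cases to Theorem~\ref{th:green} via absolute continuity of the bridge law with respect to the Brownian motion law on initial intervals $[0,T-\delta]$, combined with a separate control of the last time window $[T-\delta,T]$. Let $P^x$ denote the law of planar Brownian motion started at $x$, and $P^{x,y}$ the law of the bridge from $x$ to $y$ on $[0,T]$ (with $y=x$ corresponding to the loop case). For any $\delta\in(0,T)$, the restriction of $P^{x,y}$ to $\mathcal{F}_{T-\delta}$ is mutually absolutely continuous with respect to that of $P^x$, with Radon--Nikodym density proportional to $p_\delta(X_{T-\delta},y)$. Hence any $P^x$-almost sure property of $X|_{[0,T-\delta]}$ is automatically $P^{x,y}$-almost sure.

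To exploit this, I would split the closed loop $L=X|_{[0,T]}\cup[X_T,X_0]$ into two loops based at $x=X_0$ by inserting the chord $[X_{T-\delta},x]$:
\[
  L_1 = X|_{[0,T-\delta]}\cup[X_{T-\delta},x],\qquad L_2 = [x,X_{T-\delta}]\cup X|_{[T-\delta,T]}\cup[X_T,x].
\]
The additivity of winding numbers gives $\n_L=\n_{L_1}+\n_{L_2}$ off the ranges. Applying Theorem~\ref{th:green} with horizon $T-\delta$ yields $P^x$-a.s.\ convergence of $\int[\n_{L_1}]_k f\,dz$ as $k\to\infty$, which by the absolute continuity also holds $P^{x,y}$-a.s. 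The loop $L_2$, meanwhile, degenerates as $\delta\to 0$: its Brownian piece has diameter $O(\sqrt{\delta\log(1/\delta)})$, $X_{T-\delta}\to y$ by continuity of the bridge, and $L_2$ collapses onto the back-and-forth segment $[x,y]\cup[y,x]$, so $\n_{L_2}(z)\to 0$ almost everywhere.

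The main obstacle is that the truncation $[\cdot]_k$ does not split over $\n_L=\n_{L_1}+\n_{L_2}$. Using the $1$-Lipschitz property of the clipping variant of $[\cdot]_k$ (and an analogous estimate, up to a controlled level-set term, for the indicator variant), one obtains $|[\n_L]_k-[\n_{L_1}]_k|\le|\n_{L_2}|$ pointwise, so the discrepancy between $\int[\n_L]_kf\,dz$ and $\int[\n_{L_1}]_kf\,dz$ is bounded uniformly in $k$ by $\int|\n_{L_2}(z)|\,|f(z)|\,dz$. It remains to show that this integral tends to $0$ almost surely as $\delta\to 0$; this follows from $\n_{L_2}\to 0$ almost everywhere together with a uniform $L^p_{\mathrm{loc}}$-bound for $p<2$ on the winding function (in the spirit of Werner~\cite{Werner3}), transferred to the bridge via the explicit decomposition $X_t=\tilde X_t+\frac{t}{T}(y-x)$ with $\tilde X$ a Brownian loop at $x$, or via a further absolute-continuity argument on a sub-interval $[T-\delta,T-\delta']$. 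Taking first $k\to\infty$ and then $\delta\to 0$ concludes.
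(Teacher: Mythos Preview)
Your argument has a genuine gap at the step where you bound the discrepancy by $\int|\n_{L_2}(z)|\,|f(z)|\,dz$. This integral is almost surely infinite for every $\delta>0$: the loop $L_2$ contains the Brownian arc $X|_{[T-\delta,T]}$, and the winding function of any loop containing a non-degenerate Brownian arc is not in $L^1$. Concretely, from $D_N\sim\frac{1}{2\pi N}$ one gets $|\mathcal{A}_k|\sim\frac{1}{2\pi k^2}$, so $\sum_k |k|^p|\mathcal{A}_k|<\infty$ only for $p<1$; the $L^p_{\mathrm{loc}}$ bound you invoke (this is precisely what \cite{Werner3} establishes) holds only in that range, not for $p<2$, and never reaches $p=1$. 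Hence your uniform-in-$k$ control is vacuous, and the double limit ``first $k\to\infty$, then $\delta\to 0$'' cannot be closed this way. Shrinking $\delta$ does not help: the non-integrability is local along the Brownian piece and persists at every scale.

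The paper's proof sidesteps this by a different decomposition. It splits the bridge at the midpoint into $X^1=X|_{[0,1/2]}$ and $X^2=X|_{[1/2,1]}$, so that (after time reversal for $X^2$) \emph{both} pieces have a density with respect to Brownian motion and Theorem~\ref{th:green} applies to each separately. The remaining issue is additivity, $\dashint \n_X f=\dashint \n_{X^1}f+\dashint \n_{X^2}f$, which cannot be obtained by any $L^1$ estimate for exactly the reason above. Instead the paper reinterprets each regularised integral as the position parameter of a variable in the strong domain of attraction of a Cauchy law (Lemma~\ref{le:position}) and invokes Lemma~\ref{le:preline}, whose hypothesis $|\mathcal{D}^{X^1}_{\pm N}\cap\mathcal{D}^{X^2}_{\pm N}|=o(N^{-1-\delta})$ is verified via a further subdivision into quarters and Lemma~\ref{le:joint}. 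In short: rather than making one piece small, the paper makes both pieces absolutely continuous with respect to Brownian motion and handles the additivity through the Cauchy-attraction machinery already developed for Lemma~\ref{le:line}.
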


This generalises a result the present author obtained in~\cite{LAWA} in the special case when $\eta(x)=x^1\d x^2$: in this case, the right-hand side in~\eqref{eq:thgreensto} is the Lévy area, and $\partial_1\eta_2-\partial_2\eta_1$ is constant, which simplify much the problem. The proof here uses the tools from~\cite{LAWA}, but also some estimations the author obtained later in \cite{BWE}.

Other forms of stochastic Green's formula, using different regularisation methods, were obtained in \cite{Werner2}. The key advantages of our method is that we can obtain almost sure convergence (thus $\dashint_{\mathbb{R}^2} \n_X  f\d \lambda$ is defined deterministically), and  the function we integrate when the cut-off is still present only depends on the value of the winding function \emph{at this exact point} rather than on its vicinity\footnote{In particular, the definition of $\dashint_{\mathbb{R}^2} \n_X  f \d \lambda$ is invariant under diffeomorphism, thus intrinsic to the data of $\eta$ and $X$ in the smooth surface defined by $\mathbb{R}^2$, rather than dependent on its specific identification with $\mathbb{R}^2$.}. This will allow us to prove a $1$-stable central limit theorem for a Monte--Carlo type of approximation of~\eqref{eq:thgreensto}, where the integral is replaced with a sum over a Poisson point process with large intensity on the plane, as we now explain.

\subsection{Magnetic impurities}
In the second part of the paper, we consider the weighted and averaged winding of a Brownian motion $X$ around a Poisson distributed set of points $\mathcal{P}$, which is partly motivated by mesoscopic models for electrons in disordered systems. Here, the Poisson point process models magnetic impurities inside a 2 dimensional medium while the Brownian path models a quasiclassical electron scattering inside this medium. The impurities interact with the electron via the Aharonov--Bohm effect and produce the phase shift  $\exp( i\alpha  \sum_{z\in \mathcal{P}} f(z) \n_X(z)  ) $. In \cite{Desbois2}, the failure of the law of large numbers in this situation was already recognised and the correction due to the Cauchy tail of Brownian windings is correctly computed, in the situation the weight function $f$ is constant and the intensity of the point process is large (and inversely proportional to $\alpha$), but translation-invariant. See \cite{physmes,localization,Desbois, inhom3} and references therein for the physics literature on the topic. We consider here the general situation where these two assumptions are dropped, with the effect that the Cauchy parameters are now functions of $X$ : in the previous special case, the relevant expectations with respect to $\mathcal{P}$, i.e. conditional on $X$, are asymptotically deterministic. In the general case, this is no longer the case and we express them explicitly in term of $X$.

\begin{theoremi}
  \label{th:magne}
  Let $f,g\in \mathcal{C}^{\beta}_2(\mathbb{R}^2 )$ for some $\beta>0$, with $g\geq 0$. For $\rho>0$, let $\mathcal{P}$ be Poisson process on $\mathbb{R}^2$ with intensity $\rho g\d \lambda$, where $\d \lambda$ is the Lebesgue measure on $\mathbb{R}^2$, and let $X$ be either a Brownian motion or a Brownian bridge with duration $1$, independent from $\mathcal{P}$. Then, $X$-almost surely, for all $\alpha\in \mathbb{R}$,
  \[
  \lim_{\rho\to \infty} \mathbb{E}^{\mathcal{P}}\big[ \exp( i \frac{\alpha}{\rho}  \sum_{z\in \mathcal{P}} \n_X(z)f(z)  )\big]=
  \exp\Big( i\alpha \dashint \n_Xf g \d\lambda - \frac{|\alpha|}{2} \int_0^1 |f(X_t)|g(X_t)\d t  \Big)
  \]
  where $\mathbb{E}^{\mathcal{P}}=\mathbb{E}[\ \cdot \ |X]$ is the expectation over $\mathcal{P}$.
\end{theoremi}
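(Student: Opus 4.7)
The plan is to apply Campbell's formula for Poisson point processes to the inner expectation, and then identify the two terms in the resulting exponent matching the two terms on the right hand side. Conditionally on $X$, the process $\mathcal{P}$ has intensity $\rho g(z)\d z$, so
\[
  \mathbb{E}^{\mathcal{P}}\Big[e^{i\frac{1}{\rho}\sum_{z\in\mathcal{P}}f(z)\n_X(z)}\Big]=\exp\bigg(\int_{\mathbb{R}^2}\rho\big(e^{if(z)\n_X(z)/\rho}-1\big)g(z)\,\d z\bigg),
\]
and it suffices to show that, $X$-almost surely, the exponent converges to $i\dashint_{\mathbb{R}^2}\n_X fg\,\d z-\tfrac12\int_0^1|f(X_t)|g(X_t)\,\d t$ as $\rho\to\infty$.

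My decomposition uses $e^{iu}-1=iu\,\mathbf{1}_{|u|\le 1}+\psi(u)$, where $\psi(u):=e^{iu}-1-iu\mathbf{1}_{|u|\le 1}$ is bounded and satisfies $\psi(u)=O(u^2)$ at the origin, applied pointwise with $u=f(z)\n_X(z)/\rho$. This splits the exponent into a \emph{principal value} piece $A_\rho:=i\int_{\mathbb{R}^2} f(z)\n_X(z)\,\mathbf{1}_{|f\n_X|\le\rho}\,g(z)\,\d z$ and a \emph{Cauchy} piece $B_\rho:=\int_{\mathbb{R}^2}\rho\,\psi\!\big(f(z)\n_X(z)/\rho\big)g(z)\,\d z$. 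Up to a thin boundary layer $\{|\n_X|\approx\rho/\|f\|_\infty\}\cap\{|f\n_X|\le\rho\}$ to be reabsorbed into $B_\rho$, $A_\rho$ is $i\int[\n_X]_k fg\,\d z$ with $k$ of order $\rho$, so Theorem \ref{th:green} applied to the Hölder function $fg\in\mathcal{C}^\varepsilon_b$ gives $A_\rho\to i\dashint\n_X fg\,\d z$ almost surely. For $B_\rho$, outside a shrinking tube around the range of $X$ the function $\n_X$ is bounded, hence $\psi=O(\rho^{-2})$ and that part contributes $O(\rho^{-1})$; only a shrinking tube around the trajectory matters.

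The analysis inside this tube is the core of the argument. Following the near-trajectory approach of \cite{LAWA}, the winding $\n_X(z)$ for $z$ close to $X_t$ is controlled via Itô's excursion theory of $X$ around $z$; its conditional law has a Cauchy-type tail, and the induced level-set measure $\mathrm{Leb}(\{|\n_X|\ge n\})$ decays like $1/n$. After replacing $f(z)g(z)$ by $f(X_t)g(X_t)$ at the nearest trajectory point -- a replacement made legitimate by the $\mathcal{C}^\varepsilon$ regularity, with error controlled by $\|fg\|_{\mathcal C^\varepsilon}$ times a power of the transverse scale -- the local computation collapses to the Lévy--Khintchin calculation for a Cauchy random variable. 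Its real part produces the density $-\tfrac12|f(X_t)|g(X_t)$ along the trajectory via the identity $\int_{\mathbb{R}}(\cos(au)-1)u^{-2}\,\d u=-\pi|a|$, while its imaginary counterpart supplies the principal-value correction that cancels the cutoff discrepancy in $A_\rho$. Integration in $t\in[0,1]$ then yields the claimed limit.

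The main obstacle is making the tube analysis rigorous: a planar Brownian path has no classical tubular neighbourhood, so the heuristic change of variables must be replaced by the quantitative excursion-theoretic arguments developed in \cite{LAWA}; the new ingredient here is to carry Hölder test functions through that analysis, which is precisely what the $\mathcal C^\varepsilon$ assumptions on $f$ and $g$ enable. The Brownian bridge case finally follows from the Brownian motion case together with Corollary \ref{coro:green}.
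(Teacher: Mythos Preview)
Your opening via Campbell's formula is exactly what the paper does. The divergence begins with your decomposition $e^{iu}-1=iu\,\mathbf{1}_{|u|\le 1}+\psi(u)$ applied in the variable $u=f(z)\n_X(z)/\rho$. This creates two difficulties you do not resolve. First, the cutoff in $A_\rho$ is $\{|f\n_X|\le\rho\}$, which is \emph{not} a level set of $\n_X$: where $f$ is small it contains points of arbitrarily large winding, so the ``thin boundary layer'' you plan to reabsorb into $B_\rho$ is not thin in winding and the passage to $\int[\n_X]_k fg$ needs real work. Second---and this is the genuine gap---you explicitly flag the $B_\rho$ analysis as the ``main obstacle'' and leave it at the level of a heuristic tube/excursion picture. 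That is precisely the hard part of the theorem, so the proposal as it stands is incomplete.

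The paper sidesteps both issues by never splitting in $u$. It decomposes instead over the level sets of $\n_X$: writing the exponent as $\sum_{k\ne 0}\int_{\mathcal A_k}(e^{ik\beta f}-1)g\,\d z$ with $\beta=1/\rho$, Abel-summing to $\sum_{k\ge 1}\big(\phi_{k,\beta}(\mathcal D_k)+\phi_{-k,\beta}(\mathcal D_{-k})\big)$, and then splitting the sum over $k$ into a bulk $1\le k\le N_1$, a tail $N_1<k\le N_2$, and an end $k>N_2$. The key input is not a tube argument but Lemma~\ref{le:bound} (from \cite{BWE}): for every $\phi\in\mathcal C^\epsilon_b$ one has $\phi(\mathcal D_n)=\frac{1}{2\pi n}\int_0^1\phi(X_t)\,\d t+O(n^{-1-\delta})\|\phi\|_{\mathcal C^\epsilon_b}$. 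This single estimate does all the geometric work: it makes the bulk converge to $i\beta\dashint\n_X fg$, and it reduces the tail to the elementary sum $-\frac{\beta}{\pi}\sum_k\int_0^1 f(X_u)g(X_u)\frac{\sin(k\beta f(X_u))}{k}\,\d u$, whose limit $-\frac{\beta}{2}\int_0^1|f(X_t)|g(X_t)\,\d t$ is straight calculus (the discrete counterpart of your identity $\int(\cos(au)-1)u^{-2}\d u=-\pi|a|$). A separate compensation argument disposes of the end $k>N_2$. No tubular neighbourhood ever appears; the occupation measure enters directly through Lemma~\ref{le:bound}. Your mention that $\mathrm{Leb}(\{|\n_X|\ge n\})\sim c/n$ is the $f\equiv 1$ case of exactly this lemma---what you are missing is its weighted version, which is already available and which turns the whole computation into a sum over $k$ with explicit asymptotics.
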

By Lévy's continuity theorem and using~\eqref{eq:thgreensto}, this is equivalent to the following $1$-stable central limit theorem, which replaces the usual law of large numbers which would hold if the function $\n_X$ was integrable:
\begin{theoremi}
$X$-almost surely, in distribution over $\mathcal{P}$,
\[\frac{1}{\rho}  \sum_{z\in \mathcal{P}} \n_X(z) f(z)\underset{\rho\to \infty}\longrightarrow
\mathscr{C}\Big(\int_0^T \eta\circ \d X + \int_{[X_T,X_0]} \eta\ , \ \frac{1}{2}\int_0^1 |f(X_t)|g(X_t)\d t \Big),\]
where $\mathscr{C}(p,\sigma)$ is the Cauchy distribution with position parameter $p$ and scale parameter $\sigma$, and $\eta$ is a differential form such that $\d \eta(x) =f(x)g(x)\d x$.
\end{theoremi}
We then easily deduce the  following stronger result. 
\begin{corollaryi}
  \label{coro:magne}
  Let $g\in \mathcal{C}^{\beta}_2(\mathbb{R}^2 )$, with $g\geq 0$. For $\rho>0$, let $\mathcal{P}$ be Poisson process on $\mathbb{R}^2$ with intensity $\rho g(z)\d z$, and $X$ be either a Brownian motion or a Brownian bridge with duration $1$, independent from $\mathcal{P}$. Let also $\Gamma:[0,1]\to \mathbb{R}$
  be a standard Cauchy process. Then, for all $(f_1,\dots, f_n)\in \mathcal{C}^{\beta}_2(\mathbb{R}^2)$, $X$-almost surely, the $n$-uple
  \[ \Big( \frac{1}{\rho}  \sum_{z\in \mathcal{P}} f_1(z)\n_X(z) , \dots, \frac{1}{\rho}  \sum_{z\in \mathcal{P}} f_n(z)\n_X(z) \Big)\]
  converges in distribution toward $
  (\xi(f_1),\dots, \xi(f_n) )$
  where
  \[\xi(f)\coloneqq \dashint \n_X f g\d \lambda+\frac{1}{2} \int_0^1 f(X_t)g(X_t) \d \Gamma_t.\]
\end{corollaryi}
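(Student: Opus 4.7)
The plan is to deduce the joint convergence in distribution from Theorem \ref{th:magne} via the Cram\'er--Wold device, after rewriting the right-hand side as a characteristic function. Throughout I condition on $X$.

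Fix $\vec\alpha=(\alpha_1,\dots,\alpha_n)\in\mathbb{R}^n$ and set $F_{\vec\alpha}:=\sum_{j=1}^n\alpha_j f_j$. The linear combination
\[
\vec\alpha\cdot \frac{1}{\rho}\sum_{z\in\mathcal{P}}\vec{f}(z)\n_X(z)=\frac{1}{\rho}\sum_{z\in\mathcal{P}}F_{\vec\alpha}(z)\n_X(z)
\]
is exactly the object treated in Theorem \ref{th:magne}. Applying that theorem with test function $\lambda F_{\vec\alpha}$ (for $\lambda\in\mathbb{R}$) and invoking the linearity of the regularised integral from Theorem \ref{th:green}, I obtain, $X$-almost surely,
\[
\mathbb{E}^{\mathcal{P}}\Big[\exp\Big(i\lambda\vec\alpha\cdot\tfrac{1}{\rho}\sum_{z\in\mathcal{P}}\vec{f}(z)\n_X(z)\Big)\Big] \xrightarrow[\rho\to\infty]{}
\exp\Big(i\lambda\vec\alpha\cdot \dashint\n_X(z)\vec{f}(z)g(z)\,\d z - \tfrac{|\lambda|}{2}\int_0^1|\vec\alpha\cdot\vec{f}(X_t)|g(X_t)\,\d t\Big).
\]
The next step is to recognise the right-hand side as $\mathbb{E}^{\Gamma}[\exp(i\lambda\vec\alpha\cdot\vec\xi)]$ with $\vec\xi=(\xi(f_1),\dots,\xi(f_n))$: conditionally on $X$, the deterministic (in $t$) integrable curve $\tfrac12 g(X_t)\vec{f}(X_t)$ integrated against the independent standard Cauchy process $\Gamma$ has characteristic function
\[
\mathbb{E}^{\Gamma}\Big[\exp\Big(i\vec\beta\cdot\tfrac12\int_0^1\vec{f}(X_t)g(X_t)\,\d\Gamma_t\Big)\Big]=\exp\Big(-\tfrac12\int_0^1|\vec\beta\cdot\vec{f}(X_t)|g(X_t)\,\d t\Big),
\]
by the L\'evy--Khintchine formula for $\Gamma$. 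Comparing with the display above at $\vec\beta=\lambda\vec\alpha$ shows that the conditional characteristic function of the left-hand $n$-uple converges to that of $\vec\xi$ at every $\lambda\vec\alpha$, hence L\'evy continuity plus Cram\'er--Wold gives the claimed joint convergence.

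The only care needed is that the $X$-null set produced by Theorem \ref{th:magne} a priori depends on the test function. I would therefore fix a countable dense subset $D\subset\mathbb{R}^n$ and take the countable intersection of the exceptional events obtained by applying Theorem \ref{th:magne} to each $F_{\vec\alpha}$, $\vec\alpha\in D$. Since $\vec\alpha\mapsto\mathbb{E}^{\Gamma}[e^{i\vec\alpha\cdot\vec\xi}]$ is continuous on $\mathbb{R}^n$ (by dominated convergence using boundedness of $g$ and continuity of $\vec f$ along $X$), convergence on $D$ with a limit continuous at the origin suffices, by the standard strengthening of L\'evy's continuity theorem, to force joint convergence in distribution on a single full-probability event. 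This null-set bookkeeping is the one mild subtlety; the rest is pure algebra of characteristic functions combined with the already established Theorem \ref{th:magne}.
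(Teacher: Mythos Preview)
Your argument is correct and follows essentially the same route as the paper: linearity of $\xi_\lambda$ and $\xi$ reduces the joint statement via Cram\'er--Wold to the one-dimensional case, which is then handled by L\'evy's continuity theorem together with Theorem~\ref{th:magne} (equivalently, Lemma~\ref{le:magnetic}) and the identification of $\int_0^1 h(X_t)\,\d\Gamma_t$ as a centred Cauchy variable with scale $\int_0^1 |h(X_t)|\,\d t$. Your density argument for the null sets is in fact more careful than what the paper writes down; the paper simply invokes Cram\'er--Wold without discussing the $\vec\alpha$-dependence of the exceptional set, whereas your tightness-plus-dense-set argument makes this step rigorous.
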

This has a nice rough-path theoretic interpretation which goes roughly as follows: one can think of the environment as macroscopically charged with the density of charge $g$, in that for example, along any smooth loop $\gamma$, the total flux $ \frac{1}{\rho}  \sum_{z\in \mathcal{P}} f_1(z)\n_\gamma(z)$ will converge toward $\int_{\mathbb{R}^2} f_1 \n_\gamma g \d \lambda$. However, for a loop as rough as a Brownian motion, the microscopic details do matter and the Poissonisation has the effect, event in the large $\rho$ limit, of shifting the canonical Stratonovich rough path extension of $X$ whose antisymmetric part is $(\mathbb{A}_{s,t})_{s<t}$ to
$( \mathbb{A}_{s,t}+\Gamma_t-\Gamma_s)_{s<t}$ in the case $g=1$, or in the general case to
$( \mathbb{A}_{s,t}+\tilde{\Gamma}_{L_t}-\tilde{\Gamma}_{L_s})_{s<t}$, with $\tilde{\Gamma}$ a copy of $\Gamma$ and $L_t=\int_0^t g(X_u)\d u$.
As far as the author is aware, this is the first known natural occurrence of a non-standard rough path extension which is related to the microscopic details of the environment (in the case e.g. of the physical Brownian motion in magnetic field \cite{frizgassiatlyons}, the microscopic details of the \emph{path} play the central role).

\begin{remarki}
  Since all the results hold $X$-almost surely, and since the functions $f,g$ are continuous, the assumptions that they are square-integrable can easily be lifted. Some of the intermediate quantitative results depends upon their $L^\infty(\mathbb{R}^2)$ norms; working under the $L^2(\mathbb{R}^2)$ assumption makes is easier to obtain estimations in $L^p(\Omega)$ during intermediate steps.
\end{remarki}
\begin{remarki}
  Given $f,g\in \mathcal{C}^\beta_2(\mathbb{R}^2)$, there always exists a differential $1$-form $\eta$ with regularity $\mathcal{C}^{1+\beta} $ such that $\partial_1\eta_2-\partial_2\eta_1=fg$, so that $\dashint \n_X f g \d \lambda$ can always be written as a stochastic integral via~\eqref{eq:thgreensto}.
\end{remarki}

This paper is built in the continuity of \cite{LAWA} and \cite{BWE}, and we rely heavily on the results these papers contain.

\section{Notations}
\subsection{Differential forms and integrals}
For  $f\in \mathcal{C}^\beta(\mathbb{R}^2)$, define the semi-norm
\[
|f|_{\mathcal{C}^\beta}\coloneqq \sup_{\substack{x,y\in \mathbb{R}^2\\ x\neq y}} \frac{ f(x)-f(y)  }{|x-y|^\beta}.\]
The space $\mathcal{C}_2^\beta(\mathbb{R}^2)=\mathcal{C}^\beta(\mathbb{R}^2)\cap L^2(\mathbb{R}^2)\subset \mathcal{C}^\beta(\mathbb{R}^2)\cap L^\infty(\mathbb{R}^2)$ is endowed with the norm
\[ \|f\|_{\mathcal{C}^\beta_\infty}=\|f\|_\infty+|f|_{\mathcal{C}^\beta}.
\]
Given a curve $X:[0,T]\to \mathbb{R}^2$, we write
\[\int_X \eta \coloneqq \int_0^T \eta_1(X_t) \d X^1_t+\int_0^T \eta_2(X_t) \d X^2_t,\] where these integrals are to be understood either as classical integrals or as Stratonovich integrals, depending on the regularity of $X$. No It\^o integral will be involved in this paper, and all the stochastic integrals are to be understood in the sense of Stratonovich.


For $\eta\in \mathcal{C}^{1+\beta}(\mathbb{R}^2,\mathbb{R}^2)$, we identify the $2$-form $\d \eta=(\partial_1\eta_2-\partial_2\eta_1)\d x^1\wedge \d x^2$ with the signed measure $(\partial_1\eta_2-\partial_2\eta_1)\d x$, where $\d x$
is the Lebesgue measure on $\mathbb{R}^2$.

For a bounded set $\mathcal{D}\subset \mathbb{R}^2$ and $f\in L^1_{loc}(\mathbb{R}^2)$, we use the unconventional notation
\[f(\mathcal{D})\coloneqq\int_{\mathcal{D}} f(z)\d z,\]
and $|\mathcal{D}|$ for the Lebesgue measure of $\mathcal{D}$. Many subsets of $\mathbb{R}^2$ are written with curly letters, e.g. $\mathcal{D}_X$, and we then use the same notation but with a straight letter, e.g. $D_X$, for the Lebesgue measure of these sets.

%
%

 \subsection{Winding}

Given a curve $X$ on $\mathbb{R}^2$, that is a continuous function from $[0,T]$ to $\mathbb{R}^2$ for some $T>0$, we write $\bar{X}$ for the loop obtained by concatenation of $X$ with a straight line segment from $X_T$ to $X_0$. Although the parameterisation of this line segment does not matter in the following, we will assume it is parameterized by $[T,T+1]$ at constant speed, unless $X$ is already a loop (that is, a curve with $X_T=X_0$). In this case when $X$ is a loop, we set instead $\bar{X}\coloneqq X$.

Given a curve $X$ and a point $z$ outside the range of $\bar{X}$, we write $\n_X(z)$ for the winding number of $\bar{X}$ around $z$.

For a relative integer $k$, we define
\[ \mathcal{A}_k^X=\{z\in \mathbb{R}^2\setminus \Range(\bar{X}): \n_X(z)=k\}.\]
 For $n>0$, we also define
\[ \mathcal{D}_n^X=\{ z\in \mathbb{R}^2\setminus \Range(\bar{X}): \n_X(z)\geq n\}= \bigsqcup_{k\geq n} \mathcal{A}_k^X,\]
and
\[ \mathcal{D}_{-n}^X=\{ z\in \mathbb{R}^2\setminus \Range(\bar{X}): \n_X(z)\leq - n\}= \bigsqcup_{k \leq -n } \mathcal{A}_k^X.\]
We also write $A_k^X$ (resp. $D_k^X$) for the Lebesgue measure of $\mathcal{A}_k^X$ (resp. $\mathcal{D}_k^X$), and we remove the superscript $X$ when we consider one single curve.

Recall the definition
\[\cut{N}{z}\coloneqq
\left\{\begin{array}{cc}
-N &\mbox{if } z\leq -N,\\
z &\mbox{if } -N\leq z\leq N,\\
N &\mbox{if } z\geq N.\\
\end{array}
\right.
\]
Whenever it exists, we write $\dashint_{\mathbb{R}^2}  n_X f\d \lambda $ for the almost sure limit
\[ \lim_{N\to \infty} \int_{\mathbb{R}^2} \cut{N}{n_X(z)} f(z) \d z.\]
%
%

\subsection{Cauchy variables}

The Cauchy distribution $\mathcal{C}(p,\sigma)$ with position parameter $p$ and scale parameter $\sigma>0$ is the probability distribution on $\mathbb{R}$ which has a density with respect to the Lebesgue measure given at $x$ by
\[
\frac{1}{\pi \sigma} \frac{\sigma^2}{\sigma^2+(x-p)^2 }.
\]
In order to unify some results, we also include the degenerate case $\sigma=0$: we write $\mathcal{C}(p,0)$ for the Dirac measure at $p$.

Following \cite[Definition 5.2]{JohnsonSamworth}\footnote{Again, as opposed to \cite{JohnsonSamworth}, we include the trivial case $\sigma=0$ in our definition.}, we will say that a random variable $Z$ on $\mathbb{R}$ lies in the \emph{strong domain of attraction} of a Cauchy distribution if there exists $\sigma\geq 0, \delta>0$ such that
\[ \mathbb{P}(Z\geq x ) \underset{x\to +\infty}= \frac{\sigma}{\pi x}+o(x^{-(1+\delta)}), \qquad  \mathbb{P}(Z\leq -x )  \underset{x\to +\infty}= \frac{\sigma}{\pi x}+o(x^{-(1+\delta)}).\]
It then follows from Lemma 5.1 and Theorem 1.2 in \cite{JohnsonSamworth} that $Z$ follows a central limit theorem: if $(Z_i)_{i\in \mathbb{N}}$ are i.i.d. copies of $Z$, then there exists a unique $p$ such that
\[\frac{1}{N}\sum_{i=1}^N Z_i \Longrightarrow Y\sim \mathcal{C}(p,\sigma).\]
Note that the gap provided by $\delta$ is crucial for this to hold: the same assumptions but with $\delta=0$ are \emph{not} sufficient.

We will call the corresponding parameters $p$ and $\sigma$ respectively the position and scale parameters of $Z$, and write them $p_Z$ and $\sigma_Z$ (for this, we do not need to assume that $Z$ is Cauchy distributed, but only that it lies in the strong domain of attraction of a Cauchy distribution).

%
%

\section{Former results}
We will use the following results from \cite{LAWA} and \cite{BWE}.

\begin{lemma}[{\cite[Lemma 5.2]{LAWA}}]
  \label{le:position}
  Assume $Z$ belongs to the strong attraction domain of a Cauchy distribution. Then, its position parameter $p_Z$ is equal to
  \[\lim_{N\to \infty} \mathbb{E}[ \cut{N}{Z}  ].\]
\end{lemma}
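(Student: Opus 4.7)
The plan is to compute the imaginary part of the characteristic function $\phi(t) := \mathbb{E}[e^{itZ}]$ near zero in two ways, one yielding the truncated mean and the other the position parameter from the Johnson--Samworth CLT.

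The first step is to establish that $p^* := \lim_{n} \mathbb{E}[[Z]_n]$ exists. Since $[Z]_n$ is bounded, the layer-cake identity reads
\[
\mathbb{E}[[Z]_n] = \int_0^n \bigl(\mathbb{P}(Z > x) - \mathbb{P}(Z < -x)\bigr)\,\d x,
\]
and the strong-tail hypothesis gives $\mathbb{P}(Z>x) - \mathbb{P}(Z<-x) = o(x^{-(1+\delta)})$, which is integrable on $[0,\infty)$. Hence $p^* = \int_0^\infty \bigl(\mathbb{P}(Z>x) - \mathbb{P}(Z<-x)\bigr)\,\d x$. The same integration by parts (splitting $Z > 0$ and $Z < 0$, using $\d F_Z = -\d \bar F_Z$ with vanishing boundary terms) applied to $\mathbb{E}[\sin(tZ)]$ yields
\[
\mathbb{E}[\sin(tZ)] = t \int_0^\infty \cos(tx) \bigl(\mathbb{P}(Z > x) - \mathbb{P}(Z < -x)\bigr)\,\d x,
\]
from which dominated convergence gives $\mathbb{E}[\sin(tZ)]/t \to p^*$ as $t \to 0$.

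The final step is to identify $p^*$ with $p_Z$ via the Johnson--Samworth CLT, which states $\phi(t/N)^N \to e^{ip_Z t - \sigma|t|}$ as $N \to \infty$. A coarse estimate $|\phi(s) - 1| = O(|s|\log(1/|s|))$ (obtained from $|e^{isZ} - 1| \leq \min(|sZ|, 2)$ together with the Cauchy-type tail of $|Z|$) guarantees that $N|\phi(t/N)-1|^2 \to 0$, so the expansion $\log \phi = (\phi - 1) + O(|\phi - 1|^2)$ upgrades L\'evy's convergence to $N(\phi(t/N) - 1) \to ip_Z t - \sigma|t|$. Taking imaginary parts gives $N \mathbb{E}[\sin(tZ/N)] \to p_Z t$, while the previous step gives $N \mathbb{E}[\sin(tZ/N)] \to tp^*$. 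Hence $p_Z = p^*$.

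The main obstacle lies in this last step: passing from the multiplicative convergence of $\phi(t/N)^N$ to the additive statement about $N(\phi(t/N) - 1)$ requires the crude $|\phi(s) - 1|$ bound, which is precisely where the $\log$-factor produced by the Cauchy-type tail of $|Z|$ must be controlled. The rest is routine integration by parts and dominated convergence.
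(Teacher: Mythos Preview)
The paper does not give its own proof of this lemma: it is quoted verbatim as Lemma~5.2 of \cite{LAWA} and used as a black box, so there is nothing in the present paper to compare against.

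Your argument is correct. The layer-cake identity together with the strong-tail hypothesis $\mathbb{P}(Z>x)-\mathbb{P}(Z<-x)=o(x^{-(1+\delta)})$ gives the existence of $p^*$ and the formula $\mathbb{E}[\sin(tZ)]=t\int_0^\infty \cos(tx)\bigl(\mathbb{P}(Z>x)-\mathbb{P}(Z<-x)\bigr)\d x$ (the Fubini step is justified by first truncating $Z$ and passing to the limit, since the integrand on the right is absolutely integrable). Dominated convergence then yields $\mathbb{E}[\sin(tZ)]/t\to p^*$. On the other side, the Johnson--Samworth CLT and the estimate $|\phi(s)-1|=O(|s|\log(1/|s|))$ give the link to $p_Z$.

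One point deserves a word of care. Passing from $\phi(t/N)^N\to e^{ip_Zt-\sigma|t|}$ to $N(\phi(t/N)-1)\to ip_Zt-\sigma|t|$ is a complex logarithm, and $e^{a_N}\to e^a$ does not by itself force $a_N\to a$. The clean way to close this is to argue each part separately: taking moduli gives $N\log|\phi(t/N)|\to-\sigma|t|$ with no branch ambiguity, and since $N\log\phi(t/N)=N(\phi(t/N)-1)+o(1)$ by your $O(|s|\log(1/|s|))$ bound, the real part of $N(\phi(t/N)-1)$ converges to $-\sigma|t|$. Combined with your direct computation $\operatorname{Im}\,N(\phi(t/N)-1)=N\mathbb{E}[\sin(tZ/N)]\to tp^*$, you obtain $N(\phi(t/N)-1)\to -\sigma|t|+itp^*$, hence $e^{-\sigma|t|+itp^*}=e^{-\sigma|t|+itp_Z}$ for all $t$, and therefore $p^*=p_Z$. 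With this reordering the proof is complete.
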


When $Y$ and $Z$ lie in the strong attraction domain of Cauchy distributions, or even when they are Cauchy random variables, but they are not independent, $Y+Z$ does not necessarily belong to the strong attraction domain of a Cauchy distribution.
What might be even more surprising is that, even if $Y$, $Z$, and $Y+Z$ are Cauchy random variables, $p_{Y+Z}$ may differ from  $p_Y+p_Z$ (see e.g. \cite{chenShepp} for a constructive counter-example). Yet, the following lemma shows that, to ensure additivity, it suffices that the tail behavior of $Y$ and $Z$ are not too strongly correlated.
\begin{lemma}[{\cite[Lemma 5.3]{LAWA}}]
  \label{le:preline}
  Let $n\geq 1$ and $Z_1,\dots, Z_n$ be random variables which each lie in the strong attraction domain of a Cauchy distribution.
  Assume that there exists $\delta>0$ such that, for all $i,j\in \{1,\dots, n\}$, $i\neq j$,
  \begin{equation}
  \label{eq:cond:decor}
  \mathbb{P}( |Z_i|\geq r \mbox{ and } |Z_j|\geq r)\underset{r\to +\infty}=o(r^{-(1+\delta)}).
  \end{equation}
  Then, $Z\coloneqq \sum_{i=1}^n Z_i$ also lies in the strong attraction domain of a Cauchy distribution, and $p_Z=\sum_{i=1}^n p_{Z_i}$.
\end{lemma}
The reason why we consider these strong  attraction domain is the following. Consider a convex set $K$, and $X$ a planar Brownian motion. Let $P$ be distributed uniformly at random inside $K$, independently from $X$. Then, $X$-almost surely in the event that $\Range(\bar{X})\subset K$, the random variable $f(P)\n_X(P)$ (recall the realisation of $X$ is fixed but $P$ is random) lies in the strong attraction domain of a Cauchy distribution, which is essentially equivalent to the following lemma.
\begin{lemma}[{\cite[Lemma 5]{BWE}}]
  \label{le:bound}
  Let $X:[0,1]\to \mathbb{R}^2$ be a planar Brownian motion.
  For all $\beta'<\frac{1}{2}$, there exists $\delta>0$ such that almost surely, there exists $C$ such that for all bounded and uniformly continuous function $f\in \mathcal{C}_b(\R^2)$, for all $n\geq 1$,
  \begin{equation}
  \label{eq:fbound1}
  \Big|2\pi n f(\mathcal{D}_n)-\int_0^1 f(X_u)\d u \Big| \leq C (\omega_f (2|X|_{\mathcal{C}^{\beta'}} n^{-\delta} )  +\|f\|_\infty n^{-\delta} ),
  \end{equation}
  where $\omega_f$ is the continuity modulus of $f$, i.e. $\omega_f(\epsilon)\coloneqq \sup_{x,y: |x-y|\leq \epsilon} |f(x)-f(y)|$.
\end{lemma}
Of course, from symmetry in distribution of the Brownian motion, Lemma~\ref{le:bound} also holds when $\mathcal{D}_n$ is replaced with $\mathcal{D}_{-n}$.

We will also need some control in $L^p(\Omega)$.
\begin{lemma}[{\cite[Theorem 6.2]{LAWA}}]
  \label{le:lpLAWA}
  For all $\delta<\frac{1}{2}$ and $p\geq 2$, there exists a constant $C$ such that for all $N\geq 1$,
  \[
  \mathbb{E}\big[ \big|D_N-\tfrac{1}{2\pi N}\big|^p\big]^{\frac{1}{p}}\leq CN^{-1-\delta}.\]
\end{lemma}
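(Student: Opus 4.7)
My plan is to prove the estimate by matching moments of $D_N$. Using the representation
\[
D_N = \int_{\mathbb{R}^2} \mathbbm{1}_{\n_X(z)\geq N}\,dz,
\]
Fubini gives
\[
\mathbb{E}[D_N^k] = \int_{(\mathbb{R}^2)^k} \mathbb{P}\big(\n_X(z_i)\geq N \text{ for every } i\leq k\big)\,dz_1\cdots dz_k.
\]
By monotonicity of $L^p$ norms it is enough to treat even integer $p=2m$, and then the binomial expansion
\[
\mathbb{E}\Big[\big(D_N-\tfrac{1}{2\pi N}\big)^{2m}\Big] = \sum_{k=0}^{2m}\binom{2m}{k}\Big(-\tfrac{1}{2\pi N}\Big)^{2m-k}\mathbb{E}[D_N^k]
\]
reduces the proof to the asymptotic
\[
\mathbb{E}[D_N^k] = \Big(\tfrac{1}{2\pi N}\Big)^k + O(N^{-k-\delta})
\]
uniformly for $1\leq k\leq 2m$.

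The case $k=1$ rests on a quantitative Spitzer-type asymptotic for planar winding. Using the skew-product decomposition $X-z = R e^{i\theta}$, where $R$ is a Bessel(2) process and $\theta$ is a time-changed Brownian motion, one can control $\mathbb{P}(\n_X(z)\geq N)$ up to an error of order $N^{-1-\delta}$, and check that the leading order, integrated in $z$, produces exactly $\frac{1}{2\pi N}$ (this matches the leading asymptotic underlying Lemma \ref{le:bound} applied with $f=1$).

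For $k\geq 2$ I would split $(\mathbb{R}^2)^k$ into a diagonal region $\{\min_{i\neq j}|z_i-z_j|\leq r_N\}$, with $r_N$ an optimised threshold, and its complement. On the complement, the Markov property, applied at a sequence of stopping times at which the Brownian motion has just completed an extra winding around one of the punctures, decouples the events $\{\n_X(z_i)\geq N\}$ and yields the near-factorisation
\[
\mathbb{P}(\forall i,\ \n_X(z_i)\geq N) = \prod_{i=1}^k \mathbb{P}(\n_X(z_i)\geq N) + O(N^{-k-\delta}).
\]
On the diagonal region, the crude bound $\mathbb{P}(\n_X(z_1)\geq N)$ combined with an $O(r_N^2)$-type control on the Lebesgue measure of the near-diagonal is enough provided $r_N$ is chosen small enough.

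The main obstacle is the decoupling step. Windings around close points are strongly correlated---a path that winds $N$ times around $z$ is very likely to wind similarly around any $z'$ nearby---so the near-factorisation requires a quantitative estimate of $\mathbb{P}(\n_X(z)\geq N,\ \n_X(z')\geq N)$ as a joint function of $|z-z'|$ and $N$. The natural approach is to use the inversion map, or an excursion decomposition near the pair $\{z,z'\}$ in the spirit of Messulam--Yor, to show that the additional winding around $z'$ conditional on winding around $z$ costs a factor polynomial in $|z-z'|$. Balancing this estimate against the measure of the near-diagonal region determines the admissible exponent $\delta$, which can be pushed arbitrarily close to $\tfrac{1}{2}$.
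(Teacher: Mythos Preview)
The paper does not prove this lemma; it is quoted verbatim from \cite{LAWA} as a ``former result'' and used as a black box. So there is no proof in the paper to compare against, and your task is really to produce an independent argument.

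Your reduction, however, has a quantitative gap that prevents it from reaching the stated exponent. Suppose you succeed in proving
\[
\mathbb{E}[D_N^k]=\Big(\tfrac{1}{2\pi N}\Big)^k+O(N^{-k-\delta}),\qquad 1\le k\le 2m,
\]
with $\delta$ pushed up to (but below) $\tfrac12$. Plugging this into the binomial expansion gives
\[
\mathbb{E}\Big[\big(D_N-\tfrac{1}{2\pi N}\big)^{2m}\Big]
=\sum_{k=0}^{2m}\binom{2m}{k}\Big(-\tfrac{1}{2\pi N}\Big)^{2m-k}O(N^{-k-\delta})
=O(N^{-2m-\delta}),
\]
since each term is of order $N^{-(2m-k)}\cdot N^{-k-\delta}=N^{-2m-\delta}$. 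Taking the $2m$-th root yields only
\[
\big\|D_N-\tfrac{1}{2\pi N}\big\|_{2m}=O\big(N^{-1-\delta/(2m)}\big),
\]
so the exponent you obtain degrades with $p$ and never exceeds $-1-\tfrac{1}{2p}$. The lemma, by contrast, asserts a rate $N^{-1-\delta}$ with $\delta$ close to $\tfrac12$ \emph{uniformly in $p$}. To recover this from a moment expansion you would need a full asymptotic expansion of each $\mathbb{E}[D_N^k]$ in powers of $N^{-1}$ (so that the successive terms cancel in the alternating sum), not merely the leading term with an $O(N^{-k-\delta})$ remainder. Your decoupling sketch, even if carried out, would only deliver the latter.

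A workable route is to bound $\mathbb{E}\big[|D_N-\tfrac{1}{2\pi N}|^p\big]$ directly, without passing through the individual moments $\mathbb{E}[D_N^k]$. This is essentially what the proof of Lemma~\ref{le:boundLp} in the present paper does for the more general quantity $f(\mathcal{D}_N)$: one decomposes the path into $T\approx N^t$ pieces, uses the near-additivity of the winding to write $D_N$ as a sum of $T$ nearly independent contributions plus a controlled intersection term, and then applies the triangle inequality in $L^p$ together with the $k=1$ estimate (Lemma~\ref{le:lpLAWA} itself in that proof, or your Spitzer input) on each piece. That argument scales correctly in $p$ and is much closer to how the result is actually obtained in \cite{LAWA}.
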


Finally, the following lemma will be used to check the asymptotic decorrelation condition~\eqref{eq:cond:decor}.
\begin{lemma}[{\cite[Lemma 2.4 with $N=M$ and $T=2$]{LAWA}}]
\label{le:joint}
  Let $X,X':[0,1]\to \mathbb{R}^2$ be two independent Brownian motions in the plane.
   Then, for all $p\in[1,\infty)$, there exists $C_p<\infty$, uniform on the starting points of $X,X'$, such that for all $n>1$,  \[\mathbb{E}[  |\mathcal{D}^X_n\cap \mathcal{D}^{X'}_{n} |^p]\leq C_p\log(1+n)^{3p+1}  n^{-2p}  .\]
\end{lemma}
\begin{corollary}
\label{coro:joint}
  Let $X,X':[0,1]\to \mathbb{R}^2$ be two independent Brownian motions in the plane, with possibly random starting points.
  Then, for all $\epsilon>0$, almost surely, $n^{2-\epsilon}  |\mathcal{D}^X_n\cap \mathcal{D}^{X'}_{n} |\underset{n\to \infty}\longrightarrow 0$.
\end{corollary}
\begin{proof}
  Let $p>\max(1,\epsilon^{-1})$. Let $\delta>0$.
  Then,
  \begin{align*}
  \mathbb{P}(\exists n\geq n_0 :   n^{2-\epsilon} |\mathcal{D}^X_n\cap \mathcal{D}^{X'}_{n}| \geq \delta )
  &\leq
  \sum_{n\geq n_0}\delta^{-p} \mathbb{E}\big[   (n^{2-\epsilon} |\mathcal{D}^X_n\cap \mathcal{D}^{X'}_{n}|)^p\big]\\
  &\leq
  \delta^{-p} \sum_{n\geq n_0} n^{2p-\epsilon p} C_p\log(1+n)^{3p+1}  n^{-2p}\\
  &=C_p\delta^{-p} \sum_{n\geq n_0} \log(1+n)^{3p+1} n^{-\epsilon p}\underset{n_0\to \infty}\longrightarrow 0,
  \end{align*}
  which concludes the proof.
\end{proof}

\section{Stokes formula}
In this section, $X:[0,1]\to \mathbb{R}^2$ is a standard Brownian motion under $\mathbb{P}$.
\subsection{Existence of a limit}
We shall first prove the first part of Theorem~\ref{th:green}:
\begin{lemma}
\label{le:limitExists}
  Let $\beta>0$. $\mathbb{P}$-almost surely, for all $f\in \mathcal{C}_2^\beta ( \mathbb{R}^2)$, the limits
  \[ \dashint \n_X f \d \lambda \coloneqq\lim_{N\to \infty} \int_{\mathbb{R}^2} \cut{N}{\n_X(z)}f(z)\d z \qquad \mbox{and}\qquad   \lim_{N\to \infty} \int_{\mathbb{R}^2} \n_{X}(z) \mathbbm{1}_{|\n_{X}(z)|\leq N}\  f(z)\d z\]
  exist and are equal. Furthermore, almost surely, the application $f\mapsto \dashint \n_Xf \d \lambda $, from $\mathcal{C}^\beta_2(\mathbb{R}^2)$ endowed with the norm $\|\cdot \|_{ \mathcal{C}^\beta_\infty }$ to $\mathbb{R}$, is continuous.
\end{lemma}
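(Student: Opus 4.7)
The plan is to reduce everything to a telescoping argument on the level sets of $\n_X$, and then apply Lemma~\ref{le:bound} to obtain absolute summability of the resulting series with constants uniform in $f$.

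\textbf{Step 1: Abel summation.} I would partition $\mathbb{R}^2\setminus\Range(\bar X)$ according to the value of $\n_X$ and write
\[
\int_{\mathbb{R}^2}[\n_X(z)]_N f(z)\,dz = \sum_{|k|\leq N} k\, f(\mathcal{A}_k) + N f(\mathcal{D}_{N+1}) - N f(\mathcal{D}_{-(N+1)}).
\]
Using $f(\mathcal{A}_k)=f(\mathcal{D}_k)-f(\mathcal{D}_{k+1})$ for $k\geq1$ and the analogous identity for $k\leq -1$, a discrete integration by parts collapses the sum, giving the clean identity
\[
\int_{\mathbb{R}^2}[\n_X(z)]_N f(z)\,dz = \sum_{k=1}^{N}\bigl(f(\mathcal{D}_k)-f(\mathcal{D}_{-k})\bigr).
\]
The same computation for the other truncation yields
\[
\int_{\mathbb{R}^2}\n_X(z)\mathbbm{1}_{|\n_X(z)|\leq N} f(z)\,dz = \sum_{k=1}^{N}\bigl(f(\mathcal{D}_k)-f(\mathcal{D}_{-k})\bigr) - N\bigl(f(\mathcal{D}_{N+1})-f(\mathcal{D}_{-(N+1)})\bigr).
\]
Equality of the two limits will therefore follow if the series converges and the boundary term vanishes.

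\textbf{Step 2: Series estimate via Lemma~\ref{le:bound}.} Fix $\beta<1/2$ and let $\delta>0$, $C=C(X)$ be as in Lemma~\ref{le:bound}. Applied with $f$ and $-f$ and to both $\mathcal{D}_k$ and $\mathcal{D}_{-k}$, the lemma together with the Hölder bound $\omega_f(r)\leq |f|_{\mathcal{C}^\epsilon}\,r^\epsilon$ yields
\[
f(\mathcal{D}_k) = \frac{1}{2\pi k}\int_0^1 f(X_u)\,du + R_k(f),\qquad |R_k(f)|\leq \frac{C(X)}{k}\bigl(\|X\|_{\mathcal{C}^\beta}^\epsilon\,|f|_{\mathcal{C}^\epsilon}\,k^{-\epsilon\delta} + \|f\|_\infty\, k^{-\delta}\bigr),
\]
and similarly for $\mathcal{D}_{-k}$. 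Subtracting, the leading $\frac{1}{2\pi k}\int_0^1 f(X_u)\,du$ terms cancel, so
\[
|f(\mathcal{D}_k)-f(\mathcal{D}_{-k})| \leq C'(X)\,\|f\|_{\mathcal{C}^\epsilon_b}\,k^{-1-\eta}
\]
with $\eta=\epsilon\delta>0$ and $C'(X)$ an $X$-measurable, a.s.\ finite random variable. Crucially, the null set is the one coming from Lemma~\ref{le:bound}, which does not depend on $f$, so this bound holds simultaneously for every $f\in \mathcal{C}^\epsilon_b(\mathbb{R}^2)$. The series $\sum_k(f(\mathcal{D}_k)-f(\mathcal{D}_{-k}))$ is then absolutely convergent.

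\textbf{Step 3: Boundary term and conclusion.} The same two applications of Lemma~\ref{le:bound} give $N f(\mathcal{D}_{\pm(N+1)}) = \frac{1}{2\pi}\int_0^1 f(X_u)\,du + O(N^{-\eta})$, hence their difference tends to $0$ as $N\to\infty$. This shows that both truncations converge and to the same limit $\dashint \n_X(z)f(z)\,dz = \sum_{k=1}^\infty (f(\mathcal{D}_k)-f(\mathcal{D}_{-k}))$. Summing the estimate of Step~2 gives $|\dashint \n_X f|\leq C''(X)\|f\|_{\mathcal{C}^\epsilon_b}$, and since the map $f\mapsto \dashint \n_X f$ is linear, this uniform bound is exactly continuity on $\mathcal{C}^\epsilon_b(\mathbb{R}^2)$.

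\textbf{Main obstacle.} The only real subtlety is the order of quantifiers: the statement is "a.s., for all $f$", not "for all $f$, a.s." The whole argument hinges on the fact that Lemma~\ref{le:bound} produces a single random constant $C(X)$ that works uniformly in $f\in \mathcal{C}_b(\mathbb{R}^2)$; this is what lets us fix a full-measure event once and for all and then vary $f$ afterward. One must also be slightly careful in Step~2 not to lose this uniformity when combining the $\omega_f$ and $\|f\|_\infty$ pieces, which is handled by using $|f|_{\mathcal{C}^\epsilon}+\|f\|_\infty = \|f\|_{\mathcal{C}^\epsilon_b}$ as the single controlling norm.
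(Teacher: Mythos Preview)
Your proposal is correct and follows essentially the same route as the paper's proof: both arguments reduce to the Abel summation identity $\int[\n_X]_N f = \sum_{k=1}^N(f(\mathcal{D}_k)-f(\mathcal{D}_{-k}))$, then use Lemma~\ref{le:bound} (with $\omega_f(r)\le |f|_{\mathcal{C}^\epsilon}r^\epsilon$) to get the uniform bound $|f(\mathcal{D}_k)-f(\mathcal{D}_{-k})|\le C(X)\|f\|_{\mathcal{C}^\epsilon_b}k^{-1-\epsilon\delta}$, from which absolute convergence, vanishing of the boundary term, and continuity all follow at once on a single full-measure event. Your explicit flagging of the quantifier order is exactly the point the paper handles by fixing the event $\mathcal{E}$ up front.
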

\begin{proof}
  Let $\beta'\in\big(0,\tfrac{1}{2}\big)$. Let
  $\delta>0$ and $C(X)$ the random constant from Lemma~\ref{le:bound}. Taking $C\coloneqq \max(C(X), C(\hat{X}))$ where $\hat{X}$ is a reflection of $X$, say along the $x$-axis, we see (by taking also the reflection of $f$) that~\eqref{eq:fbound1} holds also when we replace $\mathcal{D}_n$ with $\mathcal{D}_{-n}$.
  Thus, for all $f\in \mathcal{C}^\beta_2 ( \mathbb{R}^2)$,
  \begin{align}
  \big| f(\mathcal{D}_n)-f(\mathcal{D}_{-n}) \big|
  &\leq \frac{1}{2\pi n} \Big(   \Big| 2\pi n f(\mathcal{D}_n) -  \int_0^1 f(X_u)\d u|+ \Big|   \int_0^1 f(X_u)\d u-2\pi n f(\mathcal{D}_{-n}) \Big|\Big)\nonumber\\
  &\leq \frac{C}{ \pi n}\big(\omega_f \big(2|X|_{\mathcal{C}^{\beta'}} n^{-\delta} \big)  +\|f\|_\infty n^{-\delta} \big)\nonumber \\
  &\leq \frac{C   }{ \pi n} \big( |f|_{\mathcal{C}^{\beta} },  |X|_{\mathcal{C}^{\beta'}}^\beta n^{-\delta\beta}  +\|f\|_\infty n^{-\delta} \big)=O(n^{-1-\delta\beta}). \label{eq:bound}
  \end{align}

  Thus, on the almost sure event $\mathcal{E}\coloneqq \{|X|_{\mathcal{C}^{\beta'}}<\infty$, $C<\infty\}$, which does not depend on $f$, the sum
  \[
  \sum_{n\geq 1}   (f(\mathcal{D}_n)-f(\mathcal{D}_{-n}) )
  \]
  is absolutely convergent.
  By summation by part,
  \[\sum_{n=1}^N   (f(\mathcal{D}_n)-f(\mathcal{D}_{-n}) )
  =\sum_{k=1}^\infty \cut{N}{k} (f(\mathcal{A}_k)-f(\mathcal{A}_{-k}))
  = \int_{\mathbb{R}^2} \cut{N}{\n_X(z)}f(z)\d z,  \]
  so that the right-hand side is convergent on the event $\mathcal{E}$.

\smallskip
  Consider now the other cut-off. Remark
  \[ \Big| \int_{\R^2} \cut{N}{\n_X(z)} f(z)\d z  -  \int_{\R^2}  \n_X(z) \mathbbm{1}_{|\n_X(z)|\leq N} f(z)\d z \Big| = N|f(\mathcal{D}_{N+1})-f(\mathcal{D}_{-N-1}) |, \]
  which, on the almost sure event $\mathcal{E}$, converges toward $0$ as $N$ goes to infinity (by~\eqref{eq:bound}), which proves indeed that the second limit is also well-defined and equal to the first one.

  It remains to prove the almost sure continuity of the application $f\mapsto  \dashint \n_X f \d \lambda$. Since it is clearly a linear application, it suffices to show that it is almost surely a bounded one (in the sense of linear operators). By~\eqref{eq:bound},
  \[
  \sum_{n= 1}^N  | f(\mathcal{D}_n)-f(\mathcal{D}_{-n})|\leq \frac{C (1+ |X|_{\mathcal{C}^{\beta'}}^\beta  ) }{\pi}\Big(\sum_{n=1}^\infty n^{-1-\delta\beta}\Big)
  \max(   |f|_{\mathcal{C}^{\beta} }, \|f\|_\infty  )
  \leq  C'\|f\|_{\mathcal{C}^\beta_\infty} ,
  \]
  for a random constant $\mathcal{C}'$ which depends on $\beta,\ \beta'$ and $\delta$, but not on $f$ nor $N$. Letting $N\to \infty$, we deduce \[| \dashint \n_Xf \d \lambda|\leq C' \|f\|_{\mathcal{C}^\beta_\infty},\] which concludes the proof.
\end{proof}

\subsection{Strategy for the Green formula}
We now wish to identify $ \dashint \n_Xf\d \lambda$
with the Stratonovich integral $ \int_X \eta+ \int_{[X_1,X_0]} \eta$,  when $f=\partial_1 \eta_2-\partial_2\eta_1$.
To this end, we decompose the trajectory $X$ as follows.
First, we denote by $X^{(n)}$ the dyadic piecewise-linear approximation of $X$ with $2^n$ steps: for $\lambda \in [0,1],\ i\in \{0,\dots, 2^{n}-1\}$, and $t= (i+\lambda)2^{-n}$,
\[ X^{(n)}_t\coloneqq X_{i2^{-n}}+\lambda ( X_{(i+1)2^{-n}} -X_{i2^{-n}} ).\]
For $i\in \{0,\dots, 2^{n}-1\}$, we also set $X^i$,  the restriction of $X$ to the interval $[i2^{-n},(i+1)2^{-n}]$.
Since $X^i$ is also a Brownian motion, the almost sure limit
\[ \dashint \n_{X^i}f\d \lambda \coloneqq \lim_{N\to \infty}\int_{\R^2}  \cut{N}{\n_{X^i}(z)}   f(z)\d z\]
is well-defined by Lemma~\ref{le:limitExists}, for all $f\in \mathcal{C}^\beta_2(\mathbb{R}^2)$. The fact that $X^i$ starts from a random point is not an issue: it suffices to apply Lemma~\ref{le:limitExists} to the random function $z\mapsto f(z+X_{i2^{-n}})$ and rely on translation invariance. Crucially,
Lemma~\ref{le:limitExists} identifies an almost sure event such that $\dashint \n_{X}f\d \lambda$ is well-defined \emph{for all $f$ at once}, so that we can indeed apply it to a random function.

On $\mathbb{R}^2\setminus (\Range(X)\cup \Range(X^{(n)}))$, the additivity of the winding index, with respect to concatenation of loops, gives the identity
\[
\n_X=\n_{X^{(n)}}+\sum_{i=0}^{2^n-1} \n_{X_i}.
\]
We shall prove that under mild conditions, $\dashint$ behaves additively : almost surely, for all $f\in \mathcal{C}^\beta_2(\mathbb{R}^2)$ and $n\geq 1$,
\[
\dashint \n_X f\d \lambda=\sum_{i=0}^{2^n-1} \dashint \n_{X_i} f\d  \lambda+ \int_{\mathbb{R}^2} \n_{X^{(n)}} f\d  \lambda=\sum_{i=0}^{2^n-1} \dashint \n_{X_i}(z) f(z)\d z+ \int_{X^{(n)}}  \eta+\int_{[X_1,X_0]} \eta.
\]
The second equality is by the standard Stokes' formula, for piecewise-linear loops.
As $n$ goes to infinity, we will see that the sum over $i$ in these expressions asymptotically vanishes, and that the integral along  $X^{(n)}$ converges toward the Stratonovich integral $\int_{X}  \eta$, which gives the expected formula.

Thus, in order to prove~\eqref{eq:thgreensto} and thus conclude the proof of Theorem~\ref{th:green}, it suffices to prove the three following lemma. Let $f\in \mathcal{C}^\beta_2(\mathbb{R}^2)$, and $\eta\in \mathcal{C}^{1+\beta}_2(\mathbb{R}^2,\mathbb{R}^2)$ such that $f=\partial_1\eta_2-\partial_2\eta_1$.
\begin{lemma}
  \label{le:line}
  For all $n$, almost surely,
  \begin{equation}
  \dashint \n_X f\d \lambda=\sum_{i=0}^{2^n-1} \dashint \n_{X_i} f\d \lambda+ \int_{\mathbb{R}^2} \n_{X^{(n)}} f\d \lambda.
  \end{equation}
\end{lemma}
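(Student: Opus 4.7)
The starting point is the pointwise almost-everywhere identity
\[
\n_X(z) = \sum_{i=0}^{2^n - 1} \n_{X_i}(z) + \n_{X^{(n)}}(z),
\]
valid outside $\Range(\bar X) \cup \Range(X^{(n)})$. It expresses the additivity of the winding index: bridging the successive loops $\bar X_0, \dots, \bar X_{2^n - 1}$ by the forward segments $X_{i 2^{-n}} \to X_{(i+1) 2^{-n}}$ exactly cancels the backward segments $X_{(i+1) 2^{-n}} \to X_{i 2^{-n}}$ inside each $\bar X_i$, while the bridges themselves together with the final closing segment assemble into $\bar X^{(n)}$. Since $X^{(n)}$ is a polygonal path with $2^n + 1$ segments, $\n_{X^{(n)}}$ is bounded by a random constant $M_n$ and compactly supported, so $\int_{\R^2} \n_{X^{(n)}}(z) f(z) \, \d z$ is an ordinary Lebesgue integral.

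Set $B = \sum_i \n_{X_i} = \n_X - \n_{X^{(n)}}$. A direct case analysis on $[a + b]_N$ when $|b| \leq M_n$ gives, for $N \geq 2 M_n$,
\[
|[\n_X(z)]_N - [B(z)]_N - \n_{X^{(n)}}(z)| \leq 2 M_n \, \mathbbm{1}_{|\n_X(z)| \vee |B(z)| > N - M_n}.
\]
Integrating against $f$ and applying Lemma \ref{le:bound} (which yields $|\{|\n_X| > N\}| = O(1/N)$, and the same for $B$ by the bounded perturbation $\n_X - B = \n_{X^{(n)}}$), this error is $O(M_n \|f\|_\infty / N)$ and vanishes as $N \to \infty$. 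Thus the identity reduces to the linearity claim
\[
\lim_{N \to \infty} \int_{\R^2} [B(z)]_N f(z) \, \d z = \sum_{i=0}^{2^n - 1} \dashint \n_{X_i}(z) f(z) \, \d z.
\]

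The main obstacle is this linearity: the truncation $[\cdot]_N$ does not distribute over the sum $B = \sum_i \n_{X_i}$. I would switch to the equivalent truncation $x \mathbbm{1}_{|x| \leq N}$ (with the same limit by Lemma \ref{le:limitExists}), which yields the algebraic identity
\[
B \, \mathbbm{1}_{|B| \leq N} - \sum_i \n_{X_i} \, \mathbbm{1}_{|\n_{X_i}| \leq N} = \sum_i \n_{X_i} \bigl( \mathbbm{1}_{|B| \leq N} - \mathbbm{1}_{|\n_{X_i}| \leq N} \bigr),
\]
concentrating the discrepancy on the exchange sets $A_i^+ = \{|B| \leq N < |\n_{X_i}|\}$ and $A_i^- = \{|\n_{X_i}| \leq N < |B|\}$. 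On each such set, the identity $\sum_{j \neq i} \n_{X_j} = B - \n_{X_i}$ forces, whenever $|\n_{X_i}|$ or $|B|$ exceeds $2N$, at least one other $|\n_{X_j}|$ to grow proportionally; Lemma \ref{le:joint} applied to the independent Brownian pieces $X_i, X_j$ (after rescaling) provides the joint tail bound $|\mathcal{D}_k^{X_i} \cap \mathcal{D}_k^{X_j}| = O(1/k^2)$, yielding summable decay in $N$ after a dyadic decomposition of $|\n_{X_i}|$. The most delicate regime is the first shell $\{N < |\n_{X_i}| \leq 2N\}$, where Lemma \ref{le:joint} does not directly apply; there one must exploit the sign constraint that $|B| \leq N$ imposes on $\sum_{j \neq i} \n_{X_j}$, together with the $L^p$ controls from Lemma \ref{le:lpLAWA}, to extract the remaining cancellation.
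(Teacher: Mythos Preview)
Your approach is quite different from the paper's. The paper reduces to $f \geq 0$ by linearity, introduces a random point $P$ with conditional density proportional to $f\,\mathbbm{1}_{B(0,\|X\|_\infty)}$, and recasts each regularised integral as $Z\cdot\lim_N \mathbb{E}^P\big[[\n_{\tilde X}(P)]_N \,\big|\, X\big]$. By Lemma~\ref{le:bound} each $\n_{\tilde X}(P)$ lies (conditionally on $X$) in the strong domain of attraction of a Cauchy law; Lemma~\ref{le:position} identifies the truncated expectation with the position parameter; and the additivity is then read off from Lemma~\ref{le:preline}, whose joint-tail hypothesis $\mathbb{P}\big(|\n_{X_i}(P)|\geq x,\ |\n_{X_j}(P)|\geq x\big)=o(x^{-1-\delta})$ is supplied directly by Lemma~\ref{le:joint}. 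All of the truncation-reconciliation you are attempting by hand is thus packaged into the abstract Lemma~\ref{le:preline}, proved in an earlier paper.

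Your direct route reaches the right reduction, and the dyadic shells $k\geq 1$ are handled correctly: on $\{2^k N<|\n_{X_i}|,\ |B|\leq N\}$ the constraint forces some other $|\n_{X_j}|\gtrsim 2^k N$, Lemma~\ref{le:joint} gives measure $O((2^k N)^{-2})$, and the contribution is $O(2^{-k}N^{-1})$. But the first shell is left as an assertion, and your invocation of ``sign constraints'' and Lemma~\ref{le:lpLAWA} does not constitute an argument. Concretely: on $\{N<|\n_{X_i}|\leq 2N,\ |B|\leq N\}$ the sum $\sum_{j\neq i}\n_{X_j}$ may be arbitrarily small, so Lemma~\ref{le:joint} is vacuous and the naive bound is $O(1)$. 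One way to close the gap is to split at a threshold $|\sum_{j\neq i}\n_{X_j}|\gtrless \epsilon N$: use Lemma~\ref{le:joint} on the large side, and on the small side observe that the constraint forces $|\n_{X_i}|\in(N,(1+\epsilon)N]$, whose measure is $O(\epsilon N^{-1}+N^{-1-\delta})$ by the almost-sure asymptotic in Lemma~\ref{le:bound}; then let $\epsilon=\epsilon(N)\to 0$. The sets $A_i^-$ are not discussed at all and require a parallel treatment. Until these are written out, what you have is a credible outline with its hardest step missing---which is precisely the step the paper's Cauchy-domain machinery is designed to absorb.
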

\begin{lemma}
  \label{le:residu}
  As $n$ goes to infinity,
  \[\sum_{i=0}^{2^n-1}\dashint \n_{X_i}f\d \lambda\]
  converges almost surely toward zero.
\end{lemma}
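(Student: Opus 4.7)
The plan is to rescale each piece $X_i$ to a standard Brownian motion of duration $1$, then decompose the integrand into a constant part (producing a centered martingale in $i$) and a Hölder-small remainder (controllable via Lemma~\ref{le:limitExists}).

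Set $\tau = 2^{-n}$ and introduce the rescaled increments $Y^i_s := \tau^{-1/2}(X_{i\tau+s\tau}-X_{i\tau})$, $s\in[0,1]$. By the strong Markov property, the $Y^i$ are i.i.d.\ standard Brownian motions, each independent of $\mathcal{F}_i := \sigma(X_s : s\le i\tau)$. The change of variable $z = X_{i\tau}+\tau^{1/2}w$ gives
\[
\dashint \n_{X_i}(z) f(z)\, \d z = \tau \dashint \n_{Y^i}(w) g^i(w)\, \d w, \qquad g^i(w) := f(X_{i\tau}+\tau^{1/2}w).
\]
Writing $g^i(w) = f(X_{i\tau}) + \tilde g^i(w)$ with $\tilde g^i(0)=0$, the target sum splits as $S_n + R_n$, where
\[
S_n := \tau \sum_{i=0}^{2^n-1} f(X_{i\tau})\, L_i, \qquad L_i := \dashint \n_{Y^i}(w)\,\d w,
\]
and $R_n := \tau \sum_i \dashint \n_{Y^i}(\tilde g^i)$. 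From \cite{LAWA}, each $L_i$ is (up to the explicit line-segment contribution) the Lévy area of $\bar Y^i$, hence a centered $L^2$ random variable.

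For $S_n$, since $L_i$ is measurable with respect to the increments on $[i\tau,(i+1)\tau]$, it is independent of $\mathcal F_i$, while $f(X_{i\tau})$ is $\mathcal F_i$-measurable. Hence $(\tau f(X_{i\tau})L_i)_i$ is a martingale difference sequence and
\[
\mathbb E[S_n^2] = \tau^2 \sum_i \mathbb E[f(X_{i\tau})^2]\,\mathbb E[L_1^2] \le \tau\, \|f\|_\infty^2\, \mathbb E[L_1^2] = O(2^{-n}).
\]
Markov plus Borel--Cantelli along the dyadic sequence yields $S_n \to 0$ almost surely.

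For $R_n$, observe that $|\tilde g^i(w)| \le |f|_{\mathcal C^\epsilon}\tau^{\epsilon/2}|w|^\epsilon$ and $|\tilde g^i|_{\mathcal C^\epsilon} \le |f|_{\mathcal C^\epsilon}\tau^{\epsilon/2}$. Since $\n_{Y^i}$ is supported in $B(0,R^i)$ with $R^i := \|Y^i\|_\infty$, I may multiply $\tilde g^i$ by a smooth cutoff $\phi_i$ equal to $1$ on $B(0,2R^i)$, vanishing outside $B(0,3R^i)$ and with Lipschitz constant $O((R^i)^{-1})$, without affecting $\dashint \n_{Y^i}(\tilde g^i)$. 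A short computation gives $\|\phi_i \tilde g^i\|_{\mathcal C^\epsilon_b} = O\big(|f|_{\mathcal C^\epsilon}\,\tau^{\epsilon/2}\,(1+(R^i)^\epsilon)\big)$, so Lemma~\ref{le:limitExists} applied to the standard Brownian motion $Y^i$ yields
\[
|R_n| \le C\,|f|_{\mathcal C^\epsilon}\,\tau^{\epsilon/2} \cdot \frac{1}{2^n}\sum_{i=0}^{2^n-1} (1+|Y^i|_{\mathcal C^\beta}^\epsilon)(1+(R^i)^\epsilon).
\]
The summands are i.i.d.\ with finite expectation (and even finite variance), so a second Chebyshev/Borel--Cantelli argument gives $\tau^{\epsilon/2}\cdot 2^{-n}\sum_i (\cdots) \to 0$ almost surely. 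Combined, $S_n + R_n \to 0$ almost surely.

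The main obstacle is the Hölder bound on $\phi_i \tilde g^i$: the cutoff has seminorm $|\phi_i|_{\mathcal C^\epsilon} \sim (R^i)^{-\epsilon}$, which is unbounded for paths staying close to $X_{i\tau}$. This is saved by the matching factor $\|\tilde g^i\|_{\infty,B(0,3R^i)} \lesssim |f|_{\mathcal C^\epsilon}\tau^{\epsilon/2}(R^i)^\epsilon$ entering through the product rule for Hölder seminorms, which cancels the $(R^i)^{-\epsilon}$ and leaves a clean $\tau^{\epsilon/2}$ factor driving everything to zero.
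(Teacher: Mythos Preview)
Your overall strategy is exactly the paper's: split each term into the constant value $f(X_{i\tau})$ and a H\"older-small remainder, handle the first as a martingale difference sequence with $L^2$-rate $2^{-n}$, and squeeze the second using the $\tau^{\epsilon/2}$ gain in the H\"older norm. The $S_n$ part matches the paper essentially line for line.

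The $R_n$ part has a genuine gap. When you invoke Lemma~\ref{le:limitExists} for $Y^i$, the continuity bound reads $\big|\dashint \n_{Y^i}(h)\big| \le C^{(3)}_i \|h\|_{\mathcal C^\epsilon_b}$ with a \emph{random} constant $C^{(3)}_i$ depending on $Y^i$. You then write the resulting bound with a deterministic $C$ and an explicit factor $(1+|Y^i|_{\mathcal C^\beta}^\epsilon)$, as if $C^{(3)}_i$ were controlled in this way. But look at the proof of Lemma~\ref{le:limitExists}: the constant $C^{(3)}$ inherits the random constant $C$ from Lemma~\ref{le:bound}, which is not expressed as a function of $\|Y^i\|_{\mathcal C^\beta}$ and whose integrability is nowhere established in the paper. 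Your claim that ``the summands are i.i.d.\ with finite expectation'' therefore rests on an unproved moment bound for this pathwise operator norm.

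The paper avoids this by switching from the almost-sure operator bound to the $L^p$ bound of Corollary~\ref{coro:lpbound}, which carries a \emph{deterministic} constant. That corollary, however, applies to test functions that are fixed (or at least $\sigma(X_{i\tau})$-measurable), whereas your cutoff $\phi_i$ depends on $R^i=\|Y^i\|_\infty$ and is thus \emph{not} independent of $Y^i$. The paper's remedy is to first restrict to the event $\{\|X\|_{\mathcal C^\alpha}\le R\}$ and use a cutoff of deterministic radius $R\,2^{-\alpha n}$ centred at $X_{i\tau}$: this ball contains the range of $X^i$ on that event, the resulting cutoff is $\sigma(X_{i\tau})$-measurable, and one can then condition on $X_{i\tau}$ and apply Corollary~\ref{coro:lpbound} cleanly. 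Letting $R\to\infty$ at the end upgrades to the full almost-sure statement. Replacing your random-radius cutoff by this deterministic one is the missing ingredient.
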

\begin{lemma}
  \label{le:strato}
  As $n$ goes to infinity, the integral $ \int_{X^{(n)}} \eta$ converges almost surely toward the Stratonovich integral $\int_X \eta $.
\end{lemma}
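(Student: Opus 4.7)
The plan is a Wong--Zakai-style Taylor expansion on each dyadic piece. With $t_i = i2^{-n}$ and $\Delta_i := X_{t_{i+1}} - X_{t_i}$, the curve $X^{(n)}$ travels the segment from $X_{t_i}$ to $X_{t_i}+\Delta_i$ linearly, so a change of variable gives
\[
\int_{X^{(n)}} \eta = \sum_{i=0}^{2^n-1}\sum_{j=1}^2 \Delta_i^j \int_0^1 \eta_j(X_{t_i} + s\Delta_i)\,\d s.
\]
Expanding $\eta_j\in\mathcal{C}^{1+\epsilon}$ around $X_{t_i}$ to first order and integrating $s\in[0,1]$ yields the decomposition $\int_{X^{(n)}}\eta = A_n + B_n + E_n$, where
\[
A_n = \sum_{i,j} \eta_j(X_{t_i})\,\Delta_i^j,\qquad B_n = \tfrac{1}{2}\sum_{i,j,k} \partial_k\eta_j(X_{t_i})\,\Delta_i^k\Delta_i^j,
\]
and the remainder satisfies $|E_n|\leq C|\eta|_{\mathcal{C}^{1+\epsilon}}\sum_i|\Delta_i|^{2+\epsilon}$. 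The target is to show, almost surely, that $A_n$ converges to $\sum_j\int_0^1\eta_j(X_t)\,\d X_t^j$ (It\^o), that $B_n$ converges to $\tfrac{1}{2}\sum_j\int_0^1\partial_j\eta_j(X_t)\,\d t$, and that $E_n\to 0$; their sum is exactly $\int\eta\circ\d X$ by the standard It\^o--Stratonovich conversion.

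The remainder $E_n$ is handled pathwise: pick any $\beta\in\bigl(\tfrac{1}{2+\epsilon},\tfrac{1}{2}\bigr)$ (nonempty because $\epsilon>0$); since almost surely $X\in\mathcal{C}^\beta([0,1])$, we have $|\Delta_i|\leq\|X\|_{\mathcal{C}^\beta}2^{-n\beta}$ and therefore $|E_n|\leq C\|X\|_{\mathcal{C}^\beta}^{2+\epsilon}\,2^{-n(\beta(2+\epsilon)-1)}\to 0$. The term $A_n$ is the left-point Riemann--Stieltjes sum defining the It\^o integral; It\^o's isometry combined with the Lipschitz bound on $\eta_j$ on a neighborhood of $X([0,1])$ gives $\mathbb{E}\bigl|A_n-\sum_j\int_0^1\eta_j(X_t)\,\d X_t^j\bigr|^2\leq C\,2^{-n}$, and Chebyshev together with Borel--Cantelli upgrade this to almost sure convergence along the dyadic sequence.

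The main work is on $B_n$, which I would split into diagonal ($k=j$) and off-diagonal ($k\neq j$) parts. For the diagonal part, adding and subtracting $2^{-n}$ yields
\[
\sum_i\partial_j\eta_j(X_{t_i})(\Delta_i^j)^2 = \sum_i\partial_j\eta_j(X_{t_i})\,2^{-n} + \sum_i\partial_j\eta_j(X_{t_i})\bigl((\Delta_i^j)^2 - 2^{-n}\bigr);
\]
the first summand is a Riemann sum converging almost surely to $\int_0^1\partial_j\eta_j(X_t)\,\d t$ by continuity of $t\mapsto\partial_j\eta_j(X_t)$, and the second is a martingale whose $i$-th increment is $\mathcal{F}_{t_i}$-conditionally centered with variance $O(2^{-2n})$, giving $L^2$ norm $O(2^{-n/2})$. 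For the off-diagonal part, independence of the two Brownian coordinates makes $\Delta_i^k\Delta_i^j$ conditionally centered with variance $2^{-2n}$, so the corresponding sum is again a martingale of $L^2$ norm $O(2^{-n/2})$. In both cases Borel--Cantelli along the dyadic sequence upgrades the $L^2$ bound to almost sure convergence.

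The main obstacle lies precisely in $B_n$: one must recognize that its diagonal part is exactly what reconstructs the It\^o--Stratonovich correction, while the off-diagonal part (which has no counterpart in the Stratonovich integral, owing to orthogonality of the two Brownian components) must be shown to vanish. The hypothesis $\epsilon>0$ is used in a tight manner: it is precisely what makes the bound on $E_n$ summable with some $\beta<\tfrac{1}{2}$, and what guarantees enough continuity of $\partial_k\eta_j$ for the Riemann-sum step inside $B_n$.
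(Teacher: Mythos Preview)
Your argument is correct and is the standard Wong--Zakai computation via It\^o calculus, but it is organised differently from the paper's proof. The paper never passes through the It\^o integral: instead it compares three Riemann-type sums---the midpoint sum $I^1_\Delta$, the trapezoidal sum $I^2_\Delta$, and the piecewise-linear integral $I^3_\Delta=\int_{X_\Delta}\eta$---and shows, by a purely pathwise estimate, that $I^3_\Delta-I^1_\Delta\to 0$ and $I^2_\Delta-I^1_\Delta\to 0$. The key trick there is to Taylor-expand around the \emph{midpoint} $(X_{t_i}+X_{t_{i+1}})/2$ rather than the left endpoint, so that the first-order terms cancel by symmetry and one is left with $O(|\Delta_i|^{2+\epsilon})$ errors, summable whenever $X\in\mathcal{C}^\alpha$ with $\alpha(2+\epsilon)>1$. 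The conclusion then rests on the known fact that $I^2_\Delta$ (or $I^1_\Delta$) converges to the Stratonovich integral. Your route, by contrast, is self-contained---you do not invoke any external convergence result---but it uses the Brownian martingale structure essentially (It\^o isometry for $A_n$, conditional centering and orthogonality of coordinates for $B_n$), whereas the paper's comparison of $I^1,I^2,I^3$ is valid for any $\mathcal{C}^\alpha$ path. One small point worth making explicit in your write-up: the $L^2$ bounds for $A_n$ and the martingale pieces of $B_n$ involve moments of $\eta_j(X_t)$ and $\partial_k\eta_j(X_t)$, which are finite because $\partial_k\eta_j\in\mathcal{C}^\epsilon$ forces at most polynomial growth; this deserves a sentence since $\eta$ is not assumed bounded.
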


\subsection{Proof of Lemma~\ref{le:line}}
Intuitively, the equality in Lemma~\ref{le:line} follows from integration of the almost-everywhere equality
\[ \n_X(z) =\sum_{i=0}^{2^n-1} \n_{X^i}(z)+ \n_{X^{(n)}}(z),\]
applied together with the Stokes formula for $X^{(n)}$.
However, neither $\n_X$ nor $\n_{X^i}$ are integrable, and we have to deal with the cut-offs which allowed to define $\dashint \n_Xf\d \lambda$ and the $\dashint \n_{X^i}f\d \lambda$~: for any finite $N$, the terms
\[
\cut{N}{\n_X(z)} \qquad \text{and}\qquad   \sum_{i=0}^{2^n-1} \cut{N}{\n_{X^i}(z) }+ \cut{N}{\n_{X^{(n)}}(z)}
\]
are not necessarily equal.

\begin{proof}[Proof of Lemma~\ref{le:line}]
  From linearity with respect to $f$, we can and we do assume $f\geq 0$. In the event that that the restriction of $f$ to the ball $B(0,\|X\|_{\infty})$ is identically vanishing, the result is trivial, and we thus assume that \[Z\coloneqq \int_{ B(0,\|X\|_{\infty})} f(z)\d z\]
  is strictly positive.
  Let $P$ be a random point in $\mathbb{R}^2$, those distribution conditional on $X$ admits a density with respect to the Lebesgue measure, given at $z\in \mathbb{R}^2$ by
  \[
  Z^ {-1}f(z) \mathbbm{1}_{ B(0,\|X\|_{\infty})   }(z),
  \]
  Let $\mathbb{Q}\coloneqq\mathbb{P}(\ \cdot \ |X)$.
  Then, $X$-almost surely, $\mathbb{Q}$-almost surely,
  \[\n_{X}(P) = \sum_{i=0}^{2^n-1} \n_{X_i}(P) + \n_{X^{(n)}}(P)  .\]
  For $N\geq 0$, for $\tilde{X}\in\{ X, X^i : i\in \{0,\dots 2^n-1\}\}$, it holds
  \[
  \mathbb{Q}(\n_{\tilde{X}}(P)\geq N)= Z^ {-1}f(\mathcal{D}^{\tilde{X}}_{N}) , \qquad \mathbb{Q}(\n_{\tilde{X}}(P)\leq - N)= Z^ {-1} f(\mathcal{D}^{\tilde{X}}_{-N}).
  \]
  Thus, Lemma~\ref{le:bound} (with the appropriate scaling in the case $\tilde{X}\in \{ X^i : i\in \{0,\dots 2^n-1\}\}$) ensures that for some $\delta>0$, $X$-almost surely,
  \[
  \mathbb{Q}(\n_{\tilde{X}}(P)\geq N)= \frac{Z^ {-1}}{2\pi N}\int f(\tilde{X}_u)\d u +O(N^{-1-\beta\delta}),
  \]
  and $  \mathbb{Q}(\n_{\tilde{X}}(P)\leq -N)= \frac{Z^ {-1}}{2\pi N}\int f(\tilde{X}_u)\d u +O(N^{-1-\beta\delta})$.
  Thus, $X$-almost surely, the random variable $\n_{\tilde{X}}(P)$ belong to the strong attraction domain of a Cauchy distribution.
  Furthermore, $|\n_{X^{(n)}}|$ is bounded by $2^n$ and therefore $\n_{X^{(n)}}(P)$ also belong to the strong attraction domain of a (degenerate, i.e. with $\sigma=0$) Cauchy distribution.

  In order to  apply Lemma~\ref{le:preline} to the set of variables \[(Z_0,\dots,Z_{2^n-1},Z_{2^n})\coloneqq(\n
  _{X^0}(P),\dots, \n_{X^{2^n-1}}(P), \n_{X^{(n)}}(P)),\]
  we need to check that the asymptotic decorrelation condition~\eqref{eq:cond:decor} holds under $\mathbb{Q}$.
  First, for $i\in \{0,\dots, 2^n-1\}$, for $x\geq 2^n$, since $|\n_{X^{(n)}}(P )|\leq 2^n$,
  \[ \mathbb{Q}( |\n_{X^i}(P )|\geq x \mbox{ and }   |\n_{X^{(n)}}(P )|\geq x  )=0=o(x^{-(1+\delta)}).\]
  Besides, by Corollary~\ref{coro:joint},
  for $i,j\in \{0,\dots, 2^n-1\}$, $i\neq j$,
  \begin{align*}
  \mathbb{Q}( |\n_{X^i}(P )|\geq N \mbox{ and }   |\n_{X^j}(P )|\geq N  )
  &=
  \frac{1}{Z}  f\big( \big( \mathcal{D}_N^{X^i}\cup \mathcal{D}_{-N}^{X^i}\big)\cap  \big( \mathcal{D}_N^{X^j}\cup \mathcal{D}_{-N}^{X^j}\big)\big)\\
  &\leq
  \frac{\|f\|_\infty}{Z}  \big|\big(\mathcal{D}_N^{X^i}\cup \mathcal{D}_{-N}^{X^i}\big)\cap  \big( \mathcal{D}_N^{X^j}\cup \mathcal{D}_{-N}^{X^j}\big) \big|\\
  &\leq \frac{4 C \|f\|_\infty}{Z} |N|^{-2+\epsilon},
  \end{align*}
  for an arbitrary $\epsilon>0$ (say $\epsilon=1/2$, so that $-2+\epsilon<-1$), and
  for $C$ the corresponding constant from Corollary~\ref{coro:joint}, applied to the independent Brownian motions
  \[ \hat{X}_i: t\mapsto X_{(i+1-t)2^{-n}}- X_{(i+1)2^{-n}},\qquad  \hat{X}_j:t \mapsto
  X_{(j+t)2^{-n}}-X_{(i+1)2^{-n}}\]
(we also gain a factor $2^{-n}$ from scaling, but this does not matter here as $n$ is fixed).

  Thus, the condition~\eqref{eq:cond:decor} is satisfied and we can apply Lemma~\ref{le:preline} to deduce that, $X$-almost surely, the position parameters of the variables $\{Z_i: i\in \{0,\dots, 2^n\} \}$ add up:
  \begin{equation}
  \label{eq:pAdd}
  p_{\n_X(P)}
  =p_{\sum_{i=0}^{2^n} Z_i}
  =\sum_{i=0}^{2^n} p_{Z_i}
  =\sum_{i=0}^{2^n-1} p_{\n_{X^i}(P)}+p_{\n_{X^{(n)} }(P) }.\end{equation}
  Furthermore, since $|\n_{X^{(n)} }(P)|$ is bounded, $X$-almost surely,
  \[ p_{\n_{X^{(n)} }(P) }=\mathbb{E}^\mathbb{Q}[ \n_{X^{(n)} }(P)  ]=\frac{1}{Z} \int_{\mathbb{R}^2}  \n_{X^{(n)} }(z) f(z)\d z.
  \]

  Finally, from Lemma~\ref{le:position}, we deduce that $X$-almost surely,
  \begin{align*}
  p_{n_X(P)}=  \lim_{N\to \infty} \mathbb{E}^\mathbb{Q}\big[ \cut{N}{\n_X(P)}    \big]=  \lim_{N\to \infty}  \frac{1}{Z}\int_{\mathbb{R}^2}\cut{N}{\n_X(P)}   f(z)\d z=\frac{1}{Z}\dashint \n_Xf\d \lambda,
  \end{align*}
  and similarly
  \[ p_{n_{X^i}(P)}= \frac{1}{Z}\dashint \n_{X^i}f\d \lambda.\]
  Inserting the three last equations inside~\eqref{eq:pAdd} gives
  \[
  \dashint \n_Xf\d \lambda=\sum_{i=0}^{2^n-1}\dashint \n_{X^i}f\d \lambda+ \int_{\mathbb{R}^2}  \n_{X^{(n)} }(z) f(z)\d z,
  \]
  which concludes the proof.
\end{proof}

\subsection{Proof of Lemma~\ref{le:residu}}
In order to prove  Lemma~\ref{le:residu}, we first need the following result, which roughly states that when restricted to functions $f\in \mathcal{C}^\beta_2(\mathbb{R}^2)$, the constant $C$  in Lemma~\ref{le:bound} can be chosen to lie in $L^p(\Omega)$.
\begin{lemma}
  \label{le:boundLp}
  Let $\beta>0$ and $p\geq 1$. There exist a constant $C$ and $\delta>0$ such that for all $f\in \mathcal{C}^\beta_2(\mathbb{R}^2)$ and all $N\geq 1$,
  \[
  \mathbb{E}\big[\big|f(\mathcal{D}^X_N) -\frac{1}{2\pi N} \int_0^1 f(X_t)\d t \big|^p\big]^{\frac{1}{p}}\leq C N^{-1-\delta}\|f\|_{\mathcal{C}^\beta_\infty}.
  \]
\end{lemma}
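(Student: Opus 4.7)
The strategy is to promote the almost-sure bound of Lemma \ref{le:bound} to an $L^p$ bound by combining it with classical moment bounds on the Hölder norm of Brownian motion. Fix $\beta\in(0,\tfrac12)$ and let $\delta$ be as in Lemma \ref{le:bound}. Dividing the inequality of that lemma by $2\pi N$ and using that $f\in\mathcal{C}^\epsilon(\mathbb{R}^2)$ implies $\omega_f(r)\le |f|_{\mathcal{C}^\epsilon} r^\epsilon$, one obtains, with $\delta'=\min(\delta,\delta\epsilon)>0$, the almost-sure bound
\[
\Big|f(\mathcal{D}_N)-\frac{1}{2\pi N}\int_0^1 f(X_t)\,\d t\Big| \le \frac{C(X)}{N^{1+\delta'}}\,\bigl(1+\|X\|_{\mathcal{C}^\beta}^\epsilon\bigr)\,\|f\|_{\mathcal{C}^\epsilon},
\]
where $C(X)$ denotes the random constant supplied by Lemma \ref{le:bound} (which depends only on $X$, not on $f$ or $N$).

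\textbf{Step 1: $L^p$ conversion.} Raising the above inequality to the power $p$, taking expectation, and applying Hölder's inequality to separate the two random factors on the right, yields
\[
\mathbb{E}\Big[\Big|f(\mathcal{D}_N)-\frac{1}{2\pi N}\int_0^1 f(X_t)\,\d t\Big|^p\Big]^{\!1/p} \le \frac{\|f\|_{\mathcal{C}^\epsilon}}{N^{1+\delta'}}\,\bigl\|C(X)\bigr\|_{L^{2p}}\,\bigl\|1+\|X\|_{\mathcal{C}^\beta}^{\epsilon}\bigr\|_{L^{2p}}.
\]
The second norm is finite by Fernique's theorem (equivalently, a Kolmogorov--Garsia--Rodemich--Rumsey argument gives that $\|X\|_{\mathcal{C}^\beta}$ has Gaussian tails), which immediately supplies moments of every order.

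\textbf{Step 2: moments of $C(X)$.} It remains to check that $C(X)\in L^q$ for all $q\ge 1$. This is the only point that is not already in the statements we quoted: Lemma \ref{le:bound} only asserts that $C(X)$ is almost surely finite. Revisiting the proof in \cite{BWE}, the constant $C(X)$ can be tracked explicitly and is dominated by $c_0(1+\|X\|_{\mathcal{C}^\beta}^{k_0})$ for some deterministic $c_0,k_0$; combined with Fernique this gives $\|C(X)\|_{L^{2p}}<\infty$. Plugged into Step~1, this yields the claimed bound with a deterministic constant, finishing the proof.

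\textbf{Expected main obstacle.} The only non-routine point is Step~2: extracting the quantitative polynomial dependence of $C(X)$ on $\|X\|_{\mathcal{C}^\beta}$ from the qualitative almost-sure bound of Lemma \ref{le:bound}. If a direct re-examination of the proof in \cite{BWE} proved cumbersome, an alternative is to bypass this step by restating Lemma \ref{le:bound} with an explicit polynomial control, which is the form in which such bounds are typically proved (one bounds $D_N$ by a moment inequality like Lemma \ref{le:lpLAWA} and then uses Hölder continuity of $X$ on a dyadic scale $N^{-\delta}$, each step yielding polynomial-in-$\|X\|_{\mathcal{C}^\beta}$ factors). Once this is in hand, the $L^p$ statement follows as described, with $\delta$ in the conclusion being any positive number strictly less than $\min(\delta_0,\delta_0\epsilon)$ for the $\delta_0$ of Lemma \ref{le:bound}.
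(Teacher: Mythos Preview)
Your Step~2 is the whole difficulty, and the claim you make there is almost certainly false. The random constant $C(X)$ of Lemma~\ref{le:bound} must in particular satisfy (take $f\equiv 1$)
\[
C(X)\ \ge\ 2\pi\,\sup_{n\ge 1} n^{1+\delta}\Big|D_n-\frac{1}{2\pi n}\Big|,
\]
and the right-hand side depends on the large-winding geometry of the path, a functional that is not even continuous in the uniform or H\"older topology: a small H\"older perturbation can change winding numbers, hence the sets $\mathcal{D}_n$, drastically. There is therefore no reason to expect a pathwise bound of the form $C(X)\le c_0(1+\|X\|_{\mathcal{C}^\beta}^{k_0})$, and you provide none. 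Your fallback (``restate Lemma~\ref{le:bound} with explicit polynomial control, using a moment inequality like Lemma~\ref{le:lpLAWA}'') conflates two different things: Lemma~\ref{le:lpLAWA} is an $L^p$ bound, not a pathwise polynomial bound, and it does not feed into a pathwise estimate on $C(X)$.

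The paper does not attempt to control $C(X)$ at all. Instead it reruns the decomposition behind Lemma~\ref{le:bound} directly in $L^p$: split $X$ into $T\asymp N^t$ pieces $X^i$, use the set inclusion $\mathcal{D}_N^X\subset\bigcup_i\mathcal{D}^i_{N'}\cup\bigcup_{i\ne j}\mathcal{D}^{i,j}_M$, localise $f$ on each piece via H\"older continuity of $X$, and then take $L^p$ norms term by term. The two genuinely probabilistic inputs are Lemma~\ref{le:lpLAWA} for $\mathbb{E}[|D^i_{N'}-\tfrac{1}{2\pi N'T}|^p]$ and the estimate $\mathbb{E}[(\sum_{i\ne j}|\mathcal{D}^{i,j}_M|)^p]^{1/p}\le C\log(N)^{3+2/p}M^{-2}T^{1-1/p}$ from \cite[Lemma~2.4]{LAWA}; neither of these is a consequence of H\"older regularity. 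Choosing exponents $t\in(0,\tfrac13)$, $m\in(\tfrac{1+t}{2},1-t)$ makes every term $O(N^{-1-\delta'})$. In short, the work you deferred to ``revisiting \cite{BWE}'' is exactly the content of the proof, and it cannot be replaced by Fernique's theorem.
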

\begin{proof}
  The proof is largely inspired from \cite{BWE}. During this proof, we consider a decomposition of $X$ similar to the one in the rest of the paper, but into $T$ pieces, for $T$ a positive integer which is a function $N$. Thus, for $i\in \{0,\dots,T-1\}$, we let $Y^i$ be the restriction of $X$ to the interval $[iT^{-1},(i+1)T^{-1}]$.

  Decomposing $f=f_+-f_-$ and eventually replacing $C$ with $2C$, we can again assume that $f\geq 0$. Let $X^{pl}$ be the piecewise-linear approximation of $X$ with $T$ pieces,
  \[ X^{pl}_{(i+\lambda)T^{-1}}\coloneqq X_{iT^{-1}}+\lambda (X_{(i+1)T^{-1}}- X_{iT^{-1}}) \quad \text{for} \quad i\in \{0,\dots, T-1\}\text{ and } \lambda\in [0,1].\]
  For $i,j\in \{0,\dots, T-1\}$, let
  \[ \mathcal{D}^i_N\coloneqq\mathcal{D}^{Y^i}_N,  \qquad \mathcal{D}^{i,j}_N\coloneqq\big(\mathcal{D}^{Y^i}_N \cup \mathcal{D}^{Y^i}_{-N} \big)\cap \big(\mathcal{D}^{Y^j}_N \cup \mathcal{D}^{Y^j}_{-N} \big). \]
  For $z$ outside $\Range(X)\cup \Range(X^{pl})$, we have
  \[ \n_X(z)=\sum_{i=0}^{T-1} n_{Y^i}(z)+ \n_{X^{pl}}(z) \quad\text{and}\quad |\n_{X^{pl}}(z)|\leq T. \]
Thus, for $\n_X(z)$ to be larger than $N$, it must hold that $\sum_{i=0}^{T-1} \n_{Y^i}(z)\geq N-T$. For an arbitrary $M\geq 0$, this can happen in two ways: either  there exists $i$ such that $n_{Y^i}(z)\geq N-T-M(T-1)$, or there exists $i,j$ with $i\neq j$ such that both $n_{Y^i}(z)\geq M$ and $n_{Y^j}(z)\geq M$. Thus, for all $T,M,N\geq 1$ such that $T(M+1)<N$, we have the deterministic inclusion
  \[
  \mathcal{D}^X_N\subseteq \bigcup_{i=0}^{T-1} \mathcal{D}^i_{N-T-M(T-1)} \cup \bigcup_{\substack{i,j=0\\i\neq j}}^{T-1} \mathcal{D}^{i,j}_M \cup \Range(X)\cup \Range(X^{pl}),
  \]
  and therefore, since $f\geq 0$,
  \[
  f(\mathcal{D}^X_N)\leq  \sum_{i=0}^{T-1} f(\mathcal{D}^i_{N-T-M(T-1)})+  \sum_{\substack{i,j=0\\i\neq j}}^{T-1} f(\mathcal{D}^{i,j}_M).
  \]
  We set $t\in(0,\tfrac{1}{3})$, $m\in (\tfrac{1+t}{2}, 1-t)$, $\beta'<\frac{1}{2}$,  $T\coloneqq \lfloor N^t\rfloor$, $M\coloneqq \lfloor N^m\rfloor$, and we assume that $N$ is large enough for the inequality  $T(M+1)<N$ to hold. We also set $N'\coloneqq  N-T-M(T-1)$ to ease notations.

  Remark $\mathcal{D}^i_{N'}$ is contained in the ball centered at $X_{\frac{i}{T}}$ with radius $|X|_{\mathcal{C}^{\beta'}} T^{-\beta'}$. Thus, for all $z\in \mathcal{D}^i_{N'}$,
  $|f(z)-f(X_{iT^{-1}})|\leq |f|_{\mathcal{C}^\beta} |X|^{\beta}_{\mathcal{C}^{\beta'}} T^{-\beta\beta' }  $.
  Thus,
  \begin{align*}
  f(\mathcal{D}^X_N)&\leq
   \sum_{i=0}^{T-1} f(X_{iT^{-1}} )  |\mathcal{D}^i_{N'}| + |f|_{\mathcal{C}^\beta} |X|^{\beta}_{\mathcal{C}^{\beta'}} T^{-\beta \beta'}  \sum_{i=0}^{T-1}  D^i_{N'}+ \|f\|_\infty \sum_{i\neq j} D^{i,j}_M.\\
   &\leq\Big(
   \frac{1}{2\pi N T}\sum_{i=0}^{T-1} f(X_{iT^{-1}} )+  \|f\|_\infty\sum_{i=0}^{T-1}  \big|\tfrac{1}{2\pi N T}- D^i_{N'}\big| \Big)+ |f|_{\mathcal{C}^\beta} |X|^{\beta}_{\mathcal{C}^{\beta'}} T^{-\beta \beta'}  \sum_{i=0}^{T-1}  D^i_{N'}\\
   & \hspace{10.9cm}+ \|f\|_\infty \sum_{i\neq j} D^{i,j}_M\\
   &\leq \frac{1}{2\pi N}\int_0^1 f(X_t)\d t + \frac{|f|_{\mathcal{C}^\beta} |X|_{\mathcal{C}^{\beta'}}^\beta T^{-\beta \beta'} }{2\pi N} + \|f\|_\infty\sum_{i=0}^{T-1}  \big|\tfrac{1}{2\pi N T}- D^i_{N'}\big| \\
   &\hspace{6cm}+|f|_{\mathcal{C}^\beta} |X|^{\beta}_{\mathcal{C}^{\beta'}} T^{-\beta \beta'}  \sum_{i=0}^{T-1}  D^i_{N'}+ \|f\|_\infty \sum_{i\neq j} D^{i,j}_M.
  \end{align*}
  Writing $g^p_+$ for the $p$-th power of the positive part of a function $g$, and using the triangle inequality in $L^p(\mathbb{P})$, we obtain
  \begin{align*}
  \mathbb{E}\Big[ \Big( f(\mathcal{D}_N^X) -& \frac{1}{2\pi N}  \int_0^1 f(X_t)\d t \Big)_+^p \Big]^{\frac{1}{p}}
  \leq  \frac{|f|_{\mathcal{C}^\beta} T^{-\beta \beta'}  }{2\pi N} \mathbb{E}[|X|_{\mathcal{C}^{\beta'}}^{\beta p} ]^{\frac{1}{p}}
  +\|f\|_\infty \mathbb{E}\Big[ \Big|\frac{1}{2\pi N} - D_{N'}^X \Big|^p \Big]^{\frac{1}{p}}\\
  &+|f|_{\mathcal{C}^\beta} T^{-\beta \beta'}\mathbb{E}[  (D_{N'}^X)^{2p} ]^{\frac{1}{2p}}\mathbb{E}[ |X|_{\mathcal{C}^{\beta'}}^{2p\beta}]^{\frac{1}{2p}}
  + \|f\|_\infty \mathbb{E}\Big[\Big(\sum_{i\neq j} D^{i,j}_M \Big)^p\Big]^{\frac{1}{p}}.
  \end{align*}
  We now use the asymptotic equivalences $N'\sim N$ and $\frac{1}{N}-\frac{1}{N'}\sim  N^{t+m-2}$, as $N\to\infty$, as well as Lemma~\ref{le:lpLAWA}, and the following estimations (\cite[Lemma 2.4]{LAWA}): for all $p\geq 1$, there exists a constant $C$ such that for all $N\geq 1$,
  \[
  \mathbb{E}\Big[\Big(\sum_{i\neq j} D^{i,j}_M \Big)^p\Big]^{\frac{1}{p}}\leq C \log(N+1)^{3+\frac{2}{p}} M^{-2} T^{1-\frac{1}{p}}.
  \]
  We end up with
  \begin{align*}
  \mathbb{E}\Big[ \Big( f(\mathcal{D}_N^X) - \frac{1}{2\pi N}  \int_0^1 f(X_t)\d t \Big)_+^p \Big]^{\frac{1}{p}}
  &\leq C \big( |f|_{\mathcal{C}^\beta} N^{-1-t\beta \beta'}+\|f\|_\infty N^{m+t-2}
  +\|f\|_\infty N^{-1-\delta }\\
  &+|f|_{\mathcal{C}^\beta} N^{-1-t \beta \beta'}
  + \|f\|_\infty \log(N+1)^{3+\frac{2}{p}} N^{-2 m+t-\frac{t}{p}}\big),
  \end{align*}
  for $\delta\in(0,\frac{1}{2})$ arbitrary.
  The conditions on $t$ and $m$ ensure that the powers of $N$ in the right-hand side are all less than $-1$, so that there exists $\delta'$ and $C$ such that
  \[  \mathbb{E}\Big[ \Big( f(\mathcal{D}_N^X) - \frac{1}{2\pi N}  \int_0^1 f(X_t)\d t \Big)_+^p \Big]^{\frac{1}{p}}\leq C\|f\|_{\mathcal{C}_\infty^{\beta'}} N^{-1-\delta'}. \]
  The negative part is treated in a similar way, using instead the inclusion
  \[
  \mathcal{D}^X_N\supseteq \bigcup_{i=0}^{T-1} \mathcal{D}^i_{N+T+M(T-1)} \setminus \Big( \bigcup_{\substack{i,j=0\\i\neq j}}^{T-1} \mathcal{D}^{i,j}_M \cup \Range(X)\cup \Range(X^{pl})\Big),
  \]
  and the lemma follows.
\end{proof}

\begin{corollary}
\label{coro:lpbound}
  Let $\beta>0$ and $p\geq 1$. There exists a constant $C$ such that for all $f\in \mathcal{C}_2^\beta(\mathbb{R}^2)$,
  $\mathbb{E}[|\dashint \n_Xf\d \lambda |^p]^{\frac{1}{p}}\leq C\|f\|_{\mathcal{C}_\infty^\beta}$.
\end{corollary}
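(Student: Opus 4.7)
The plan is to follow the same Abel summation skeleton used in Lemma \ref{le:limitExists}, but to replace the pathwise bound \eqref{eq:bound} with the $L^p$ estimate furnished by Lemma \ref{le:boundLp}. The crucial observation, which also underlies the very existence of the limit in Lemma \ref{le:limitExists}, is that the non-integrable leading term $\frac{1}{2\pi n}\int_0^1 f(X_t)\,\d t$ in Lemma \ref{le:boundLp} is independent of the sign of the winding, and hence cancels when one subtracts the analogous estimate for $\mathcal{D}_{-n}$.

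Concretely, I would first recall that for every $N\geq 1$ the Abel summation identity
\[
\int_{\mathbb{R}^2}[\n_X(z)]_N f(z)\,\d z = \sum_{n=1}^{N}\bigl(f(\mathcal{D}_n)-f(\mathcal{D}_{-n})\bigr)
\]
holds deterministically. Then, invoking Lemma \ref{le:boundLp} applied both to $\mathcal{D}_n^X$ and, by symmetry of Brownian motion, to $\mathcal{D}_{-n}^X$, one has, for some $\delta>0$ and $C=C(p,\epsilon)$,
\[
\mathbb{E}\Bigl[\Bigl| f(\mathcal{D}_{\pm n})-\tfrac{1}{2\pi n}\int_0^1 f(X_t)\,\d t\Bigr|^p\Bigr]^{1/p}\leq C\,n^{-1-\delta}\,\|f\|_{\mathcal{C}^\epsilon_b}.
\]
Subtracting the two copies and using the triangle inequality in $L^p$ then gives
\[
\mathbb{E}\bigl[\,|f(\mathcal{D}_n)-f(\mathcal{D}_{-n})|^p\,\bigr]^{1/p}\leq 2C\,n^{-1-\delta}\,\|f\|_{\mathcal{C}^\epsilon_b}.
\]

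Next I would apply Minkowski's inequality to the Abel sum: for every $N$,
\[
\mathbb{E}\Bigl[\Bigl|\int_{\mathbb{R}^2}[\n_X(z)]_N f(z)\,\d z\Bigr|^p\Bigr]^{1/p}\leq \sum_{n=1}^{N}\mathbb{E}\bigl[\,|f(\mathcal{D}_n)-f(\mathcal{D}_{-n})|^p\,\bigr]^{1/p}\leq 2C\,\|f\|_{\mathcal{C}^\epsilon_b}\sum_{n=1}^{\infty}n^{-1-\delta}.
\]
The right-hand side is finite and independent of $N$. Since, by Lemma \ref{le:limitExists}, $\int_{\mathbb{R}^2}[\n_X]_N f\,\d z$ converges $\mathbb{P}$-a.s. to $\dashint\n_X(z)f(z)\,\d z$, Fatou's lemma transfers the uniform $L^p$ bound to the limit, yielding $\mathbb{E}[|\dashint \n_X(z)f(z)\,\d z|^p]^{1/p}\leq C'\|f\|_{\mathcal{C}^\epsilon_b}$ with $C'=2C\sum_{n\geq 1}n^{-1-\delta}$, which is exactly the claimed estimate.

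There is no real obstacle here beyond the bookkeeping of constants; the only conceptual point is the cancellation of the $O(n^{-1})$ ``average'' term between $\mathcal{D}_n$ and $\mathcal{D}_{-n}$, without which the series $\sum_n n^{-1-\delta}$ would be replaced by a divergent $\sum_n n^{-1}$ and the argument would collapse. (The symmetry of Brownian motion that legitimizes Lemma \ref{le:boundLp} for $\mathcal{D}_{-n}$ is the same one invoked just after Lemma \ref{le:bound}.)
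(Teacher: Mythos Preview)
Your proof is correct and follows essentially the same approach as the paper: apply Lemma~\ref{le:boundLp} to both $\mathcal{D}_n$ and $\mathcal{D}_{-n}$ so that the $\tfrac{1}{2\pi n}\int_0^1 f(X_t)\,\d t$ terms cancel, then use Minkowski's inequality on the Abel sum $\sum_{n\geq 1}(f(\mathcal{D}_n)-f(\mathcal{D}_{-n}))$. Your explicit invocation of Fatou's lemma to pass the uniform $L^p$ bound to the limit is a slightly more careful formulation of the paper's direct application of the triangle inequality to the infinite series, but the substance is identical.
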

\begin{proof}
  Let $C$ and $\delta $ from Lemma~\ref{le:boundLp}. Then, for all $f\in \mathcal{C}_2^\beta$ and $n\geq 1$,
  \[ \mathbb{E}\big[|f(\mathcal{D}^X_n)-f(\mathcal{D}^X_{-n})|^p\big]^{\frac{1}{p}}\leq 2C n^{-1-\delta} \|f\|_{\mathcal{C}_2^\beta}.\]
  As previously remarked, $\dashint \n_Xf\d \lambda=\sum_{n=1}^\infty (f(\mathcal{D}^X_n)-f(\mathcal{D}^X_{-n}))$ by summation by part.
  By triangle inequality in $L^p(\mathbb{P})$,
  \[ \mathbb{E}[|\dashint \n_Xf\d \lambda |^p]^{\frac{1}{p}}=\mathbb{E}\Big[\Big|\sum_{n=1}^\infty (f(\mathcal{D}^X_n)-f(\mathcal{D}^X_{-n})) \Big|^p\Big]^{\frac{1}{p}}
  \leq 2C \|f\|_{\mathcal{C}_\infty^\beta} \sum_{n=1}^\infty N^{-1-\delta}
  \leq C'   \|f\|_{\mathcal{C}_\infty^\beta}.\qedhere\]
\end{proof}
We now prove that almost surely,
\[\sum_{i=0}^{2^n-1}\dashint \n_{X^i}f\d \lambda \underset{n \to\infty}\longrightarrow 0.\]
\begin{proof}[Proof of Lemma~\ref{le:residu}]
  For $i\in \{0, \dots, 2^n-1\}$, we define $\bar{f}^i:\R^2 \to \R$ the constant function equal to $f(X_{i2^{-n}} )$, and $\tilde{f}^i\coloneqq f-\bar{f}^i$. Since for all $i$, $f\mapsto \dashint_{X_i} \n_X f\d \lambda$ is a linear map, it suffices to show that both
  \[\sum_{i=1}^{2^n}  \dashint \n_{X_i} \bar{f}^i\d \lambda=\sum_{i=1}^{2^n} f(X_{i2^{-n}} )  \dashint \n_{X_i}\d \lambda   \qquad\mbox{ and }\qquad\sum_{i=1}^{2^n}  \dashint \n_{X_i} \tilde{f}^i\d \lambda\]
  almost surely converge toward $0$ as $n\to \infty$. This allows to use different types of arguments: for the first term we can rely on symmetry and use the fact that $\dashint \n_{X_i}\d \lambda$ vanishes on average, while for the second term we can use the fact that $\tilde{f}^i$ vanishes at $X_{i2^{-n}} $ and remains small in its vicinity.

  From symmetry, for all $i$, almost surely,
  $\mathbb{E}\Big[\dashint \n_{X_i} \d \lambda\Big|(X_s)_{s\leq i2^{-n} }\Big]=0$. It follows that, for $i<j$,
  \[\mathbb{E}\Big[f(X_{i2^{-n}}) f(X_{j2^{-n}}) \dashint_{X_i} \n_{X_i}\d \lambda \dashint  \n_{X_j} \d \lambda \Big]=
  0.\]
  Besides, from a simple scaling argument,
  \[\mathbb{E}\Big[\Big( \dashint  \n_{X_i} \d \lambda\Big)^2\Big]= 2^{-2n} \mathbb{E}\Big[\Big(\dashint \n_X\d \lambda \Big)^2\Big],\]
where $\mathbb{E}[(\dashint \n_X \d \lambda)^2]<\infty$, which follows for example from Corollary~\ref{coro:lpbound}.

  We deduce that
  \begin{align*}
  \mathbb{E}\Big[ \Big( \sum_{i=1}^{2^n} \dashint \n_{X_i} \bar{f}^i  \d \lambda \Big)^2 \Big]
  & =
\sum_{i=1}^{2^n} \mathbb{E}\Big[f(X_{i2^{-n}} )^2 \Big(\dashint \n_{X_i}   \d \lambda \Big)^2 \Big]
  \\
  &\leq 2^{-n} \|f\|^2_\infty \mathbb{E}\Big[\Big(\dashint \n_{X}\d \lambda\Big)^2\Big].
  \end{align*}
  Since this is summable over $n$, the almost sure convergence follows: for all $\epsilon>0$,
  \begin{align*}
  \mathbb{P}\Big( \exists n\geq n_0:  \Big| \sum_{i=1}^{2^n} \dashint \n_{X_i} \bar{f}^i  \d \lambda \Big|\geq \epsilon\Big)
  &\leq \frac{1}{\epsilon^2}  \mathbb{E}\Big[ \sup_{n\geq n_0}      \Big( \sum_{i=1}^{2^n} \dashint \n_{X_i} \bar{f}^i \d \lambda \Big)^2 \Big]\\
  &\leq \frac{ \sum_{n=n_0}^\infty 2^{-n}}{\epsilon^2} \|f\|^2_\infty \mathbb{E}\Big[\Big(\dashint \n_X\d \lambda\Big)^2\Big]\\
  &\underset{n_0\to\infty}\longrightarrow 0.
  \end{align*}

  In order to deal with the sum involving $\tilde{f}^i$, one must be careful in the way we use the translation invariance and scale invariance of the Brownian motion, the reason being that the norm $\| \cdot \|_{\mathcal{C}^\beta_\infty}$ is inhomogeneous under rescaling (i.e. under precomposition by multiplication with a scalar). Let $\beta'\in(0,\frac{1}{2})$, and for a given $R\geq 1$, define the event \[\mathcal{R}=\{|X|_{\mathcal{C}^{\beta'}}\leq R\}.\]
  Let $\hat{f}^i$ be the (random) function defined by
  \[
  \hat{f}^i( X_{i2^{-n}} +z )\coloneqq\left\{ \begin{array}{ll} \tilde{f}^i( X_{i2^{-n}} +z ) & \mbox{if } |z|\leq R2^{-\beta' n}, \\\tilde{f}^i( X_{i2^{-n}} + R2^{-\beta' n} \frac{z}{|z|}  ) &\mbox{otherwise.}
  \end{array}\right.
  \]
  In particular, $\hat{f}^i$ satisfies the following properties:
  \begin{itemize}
  \item  When restricted to the ball $ B(X_{i2^{-n}}, R 2^{-\beta' n})$, $\hat{f}^i=\tilde{f}^i$. In particular, conditionally on the event $\mathcal{R}$, $\hat{f}^i(\mathcal{D}^i_n )=\tilde{f}^i(\mathcal{D}^i_n ) $,
  \item $|\hat{f}^i|_{\mathcal{C}^\beta}\leq |f|_{\mathcal{C}^\beta}$, and
  $\|\hat{f}^i\|_{\infty}\leq R^\beta 2^{-\beta \beta' n} |f|_{\mathcal{C}^\beta}$,
  %
  \item As a random variable, $\hat{f}^i$ is measurable with respect to $X_{i2^{-n} }$.
  \end{itemize}

  Set also the translated and rescaled function $\check{f}^i(z)\coloneqq\hat{f}^i(X_{i2^{-n}}+ 2^{-\frac{n}{2}} z ) $, and $\check{X}^i:s\in [0,1]\mapsto 2^{\frac{n}{2}} (X_{(i+s)2^{-n}} -X_{i2^{-n}}) $, which is a standard planar Brownian motion started from $0$, independent from $X_{i2^{-n}}$.
These are such that
\[
\n_{\check{X}^i}(z)\check{f}^i(z)=\n_{X^i}(2^{-n/2}z+X_{i2^{-n}} )f(2^{-n/2}z+X_{i2^{-n}}).
\]
It is easily checked that translations and scaling behave with $\dashint$ as expected from the change of variable $w=rz+a$ (where $r>0$ and $a\in \mathbb{R}^2$) combined with the identity $\n_{X}(rz+a) =\n_{r^{-1}(X-a)}(z))$ : for $g$ the function $z \mapsto f(rz+a)$,
\[ \dashint \n_X f\d \lambda =  r^2\dashint \n_{r^{-1}(X-a)} g \d \lambda.\]
From the two last relations, we deduce
\[\dashint \n_{X^i} \hat{f}^i\d \lambda=
2^{-n} \dashint \n_{\check{X}^i}\check{f}^i\d \lambda.
\]
Since
  $\|\check{f}^i\|_{\infty}=\|\hat{f}^i\|_\infty\leq R^\beta 2^{-\beta \beta' n} |f|_{\mathcal{C}^\beta} $ and
  $ |\check{f}^i|_{\mathcal{C}^\beta}=2^{-\frac{\beta n}{2} } |\hat{f}^i|_{\mathcal{C}^\beta}\leq 2^{-\frac{\beta n}{2} }   |f|_{\mathcal{C}^\beta}\leq 2^{-\beta \beta' n} |f|_{\mathcal{C}^\beta}$,
  \[\|\check{f}^i\|_{\mathcal{C}_\infty^\beta}\leq (R^\beta+1) 2^{-\beta \beta' n}  |f|_{\mathcal{C}^\beta}.\]
  On the event $\mathcal{R}$, we have
  \[ \dashint \n_{X^i} \tilde{f}^i\d \lambda=\dashint \n_{X^i} \hat{f}^i\d \lambda= 2^{-n}\dashint \n_{\check{X}^i} \check{f}^i\d \lambda.\]
  Using Corollary~\ref{coro:lpbound} with $p=1$, we deduce
  \begin{align*}
  \mathbb{E}\Big[ \mathbbm{1}_{\mathcal{R}} \Big| \dashint \n_{X^i} \tilde{f}^i\d \lambda\Big| \Big]
  &=2^{-n} \mathbb{E}\left[ \mathbb{E}\left[\mathbbm{1}_{\mathcal{R}} \big|\dashint \n_{\check{X}^i} \check{f}^i\d \lambda\big| \middle| X_{i2^{-n}}\right]\right]\\
  &\leq 2^{-n} \mathbb{E}\big[ C \|\check{f}^i\|_{\mathcal{C}^\beta_\infty} \big]\\
  &\leq C(R^\beta+1) 2^{ -n-\beta\beta' n} |f|_{\mathcal{C}^\beta}.
  \end{align*}
  The key point is that we have managed not only to gain a factor $2^{-n}$ from scaling (which we will lose upon summation over the $2^n$ terms), but also an extra factor $2^{-\beta\beta'n}$ from the fact that $\tilde{f}^i$ vanishes at $X_{i2^{-n}}$.
  Thus, for all $\epsilon>0$,
  \begin{align*}
  \mathbb{P}\Big(\mathcal{R} \mbox{ and }\exists n\geq n_0: \Big| \sum_{i=0}^{2^n-1}\dashint \n_{X^i} \tilde{f}^i\d \lambda \Big|\geq \epsilon \Big)
  &\leq \frac{1}{\epsilon} \sum_{n=n_0}^\infty \sum_{i=0}^{2^n-1}
  \mathbb{E}\Big[ \mathbbm{1}_{\mathcal{R}} \Big| \dashint \n_{X^i} \tilde{f}^i\d \lambda\Big| \Big]\\
  &\leq C_{\beta,\beta', \epsilon,R } 2^{-\beta \beta' n_0} |f|_{\mathcal{C}^\beta}\\
  & \underset{n_0\to\infty}\longrightarrow 0.
  \end{align*}
  Since this holds for all $R$, we deduce that $\sum_{i=0}^{2^n-1}\dashint \n_{X^i} \tilde{f}^i\d \lambda$ almost surely converges toward $0$ as $n\to \infty$,   which concludes the proof.
\end{proof}

\subsection{Proof of Lemma~\ref{le:strato} }
In order to complete the proof of Theorem~\ref{th:green}, it only remains to prove lemma~\ref{le:strato}, which, for $\eta\in \mathcal{C}^{1+\beta}_2(\mathbb{R}^2,\mathbb{R}^2)$, identifies the limit
\[ \lim_{n\to \infty} \int_{X^{(n)}} \eta \]
with the Stratonovich integral of $\eta$ along $X$. This is classical, and follows for example from the following.
\begin{lemma}
  For a dissection $\Delta=(t_0=0<t_1<\dots< t_n=1)$, and $X:[0,1]\to \mathbb{R}^2$ a Brownian motion,
  let $X_\Delta$ be the piecewise-linear approximation of $X$ associated with $\Delta$: for $\lambda\in [0,1]$ and $t=\lambda t_i+(1-\lambda)t_{i+1}$,
  \[ X_\Delta(t)\coloneqq \lambda X_{t_i}+(1-\lambda) X_{t_{i+1}}.\]
  For $f\in \mathcal{C}^{1}(\mathbb{R}^2)$, let
  \begin{align*}
  I^1_\Delta(f)&= \sum_{[t_i,t_{i+1}]\in \Delta} f\Big(\frac{X_{t_{i+1}}+X_{t_i} }{2}\Big)(X^1(t_{i+1})-X^1(t_{i})),\\
  I^2_\Delta(f)&= \sum_{[t_i,t_{i+1}]\in \Delta} \frac{f(X_{t_{i+1}})+f(X_{t_i}) }{2}(X^1(t_{i+1})-X^1(t_{i})),\\
  I^3_\Delta(f)&= \int_{0}^1 f(X_\Delta(t)) \d X_\Delta(t).
  \end{align*}

  Then, almost surely, for all $f\in \mathcal{C}^{1+\beta}(\mathbb{R}^2)$, as the mesh $\max_i |t_{i+1}-t_i|$ of $\Delta$ goes to $0$,
  \[ I^2_\Delta(f)-I^1_\Delta(f)\to 0 \quad \mbox{and} \quad I^3_\Delta(f)-I^1_\Delta(f)\to 0.\]
\end{lemma}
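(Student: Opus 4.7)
The plan is to reduce both differences $I^2_\Delta(f)-I^1_\Delta(f)$ and $I^3_\Delta(f)-I^1_\Delta(f)$ to a common deterministic estimate of the form $C|\nabla f|_{\mathcal{C}^\epsilon}\sum_{i}|X_{t_{i+1}}-X_{t_i}|^{2+\epsilon}$, and then to control this sum using the H\"older regularity of Brownian motion. No probabilistic argument is needed beyond the a.s. finiteness of the H\"older norm of $X$.

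For the first difference I would Taylor-expand around the midpoint $m_i=\tfrac{1}{2}(X_{t_i}+X_{t_{i+1}})$. Writing $h_i=\tfrac{1}{2}(X_{t_{i+1}}-X_{t_i})$, one has $f(m_i\pm h_i)=f(m_i)\pm\nabla f(m_i)\cdot h_i+R_i^{\pm}$ with $|R_i^{\pm}|\leq C|\nabla f|_{\mathcal{C}^\epsilon}|h_i|^{1+\epsilon}$ since $\nabla f$ is $\epsilon$-H\"older; the first-order contributions cancel by symmetry, so
\[
\bigl|f(X_{t_i})+f(X_{t_{i+1}})-2f(m_i)\bigr|\leq C|\nabla f|_{\mathcal{C}^\epsilon}|X_{t_{i+1}}-X_{t_i}|^{1+\epsilon}.
\]
Multiplying by $|X^1_{t_{i+1}}-X^1_{t_i}|\leq|X_{t_{i+1}}-X_{t_i}|$ and summing over $i$ yields the announced estimate for $I^2_\Delta(f)-I^1_\Delta(f)$.

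For the second difference I would first use that on $[t_i,t_{i+1}]$ the curve $X_\Delta$ has constant derivative $(X_{t_{i+1}}-X_{t_i})/(t_{i+1}-t_i)$; after the change of variables $t=t_i+\lambda(t_{i+1}-t_i)$,
\[
I^3_\Delta(f)=\sum_i (X^1_{t_{i+1}}-X^1_{t_i})\int_0^1 f\bigl(X_{t_i}+\lambda(X_{t_{i+1}}-X_{t_i})\bigr)\d\lambda.
\]
Setting $g_i(\lambda)=f(X_{t_i}+\lambda(X_{t_{i+1}}-X_{t_i}))$, its derivative in $\lambda$ is $\epsilon$-H\"older with constant bounded by $|\nabla f|_{\mathcal{C}^\epsilon}|X_{t_{i+1}}-X_{t_i}|^{1+\epsilon}$. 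Expanding $g_i(\lambda)-g_i(\tfrac{1}{2})-g_i'(\tfrac{1}{2})(\lambda-\tfrac{1}{2})$ and integrating against $\d\lambda$ on $[0,1]$ kills the linear term by symmetry and leaves a remainder of order $|\nabla f|_{\mathcal{C}^\epsilon}|X_{t_{i+1}}-X_{t_i}|^{1+\epsilon}$; multiplication by $|X^1_{t_{i+1}}-X^1_{t_i}|$ and summation produce the same bound.

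To close the argument I would pick $\beta\in(1/(2+\epsilon),1/2)$, which is a non-empty interval since $\epsilon>0$. On the full-measure event $\{\|X\|_{\mathcal{C}^\beta}<\infty\}$,
\[
\sum_{i}|X_{t_{i+1}}-X_{t_i}|^{2+\epsilon}\leq\|X\|_{\mathcal{C}^\beta}^{2+\epsilon}\sum_i(t_{i+1}-t_i)^{(2+\epsilon)\beta}\leq\|X\|_{\mathcal{C}^\beta}^{2+\epsilon}|\Delta|^{(2+\epsilon)\beta-1}\underset{|\Delta|\to 0}\longrightarrow 0.
\]
Since the constants in both estimates depend on $f$ only through the seminorm $|\nabla f|_{\mathcal{C}^\epsilon}$, the convergence holds simultaneously for all $f\in\mathcal{C}^{1+\epsilon}(\mathbb{R}^2)$ on this event. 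There is no real obstacle: the only care required is the choice of $\beta$ in $(1/(2+\epsilon),1/2)$ and the symmetry observation that makes the first-order terms in both Taylor expansions vanish, so that the leading pieces are $o(|X_{t_{i+1}}-X_{t_i}|)$ rather than $O(|X_{t_{i+1}}-X_{t_i}|)$.
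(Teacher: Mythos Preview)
Your proposal is correct and follows essentially the same approach as the paper: Taylor-expand around the midpoint so that the first-order terms cancel by symmetry, bound the remainder by $|\nabla f|_{\mathcal{C}^\epsilon}|X_{t_{i+1}}-X_{t_i}|^{2+\epsilon}$, and close with the H\"older regularity of $X$ for some exponent $\beta\in(1/(2+\epsilon),1/2)$. The only cosmetic difference is that for $I^3_\Delta(f)-I^1_\Delta(f)$ the paper pairs $\lambda$ with $1-\lambda$ and integrates over $[\tfrac{1}{2},1]$, whereas you Taylor-expand $g_i$ at $\lambda=\tfrac{1}{2}$; both exploit the same symmetry.
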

\begin{proof}
  Let $\beta'\in \big(\tfrac{1}{2+\beta},\tfrac{1}{2}\big)$. Remark for $x,h\in \mathbb{R}^2$, $|f(x+h)-f(x)-\nabla_{h}f(x)|\leq |h|^{1+\beta} |\nabla f|_{\mathcal{C}^\beta}$, and thus
  \[
|f(x+h)+f(x-h)-2f(x)|\leq |f(x+h)-f(x)-\nabla_{h}f(x)|+|f(x-h)-f(x)-\nabla_{-h}f(x)|\leq 2|h|^{1+\beta} |\nabla f|_{\mathcal{C}^\beta}.
  \]
  On the almost sure event $|X|_{\mathcal{C}^{\beta'}}<\infty$, we have
  \begin{align*}
  |I^2_\Delta(f)-I^1_\Delta(f)|&=
  \Big|\sum_{[t_i,t_{i+1}]\in \Delta} \Big( \frac{f(X_{t_{i+1}})+f(X_{t_i}) }{2}-f\big( \tfrac{X_{t_{i+1}}+X_{t_i} }{2}\big) \Big)(X^1_{t_{i+1}}-X^1_{t_{i}})  \Big| \\
  &\leq \sum_{[t_i,t_{i+1}]\in \Delta}\frac{1}{2}\Big| f(X_{t_{i+1}})+f(X_{t_i})- 2 f\big( \tfrac{X_{t_{i+1}}+X_{t_i} }{2}\big)\Big| \Big|X^1_{t_{i+1}}-X^1_{t_{i}}\Big|\\
  &\leq \sum_{[t_i,t_{i+1}]\in \Delta} |\nabla f|_{\mathcal{C}^{\beta}} |X_{t_{i+1}}-X_{t_i}|^{1+1+\beta}\\
  &\leq |\nabla f|_{\mathcal{C}^{\beta}} |X|_{\mathcal{C}^{\beta'} }^{2+\beta}\sum_{[t_i,t_{i+1}]\in \Delta}  |t_{i+1}-t_i|^{\beta'(2+\beta)}\underset{|\Delta|\to 0}\longrightarrow 0.
  \end{align*}
Remark the convergence is indeed uniform over dissections, there is no need to restrict ourself e.g. to dyadic dissection as it is the case for example to ensure the almost sure convergence of $I^1_\Delta(f)$.

  The second convergence is proved in a similar way:
  \begin{align*}
  &\Big|\sum_{[t_i,t_{i+1}]\in \Delta} \Big( \int_{t_i}^{t_{i+1}} f(X_\Delta(s)) \d X_\Delta(s) - f\big( \tfrac{X_{t_{i+1}}+X_{t_i} }{2}\big) (X^1_{t_{i+1}}-X^1_{t_{i}})\Big)  \Big| \\
  &\leq \sum_{[t_i,t_{i+1}]\in \Delta}  | X^1_{t_{i+1}}-X^1_{t_{i}}| \\
  &\hspace{1cm}\int_{\frac{1}{2}}^1 \Big|f(\lambda X_{t_i}+(1-\lambda) X_{t_{i+1}} )+  f((1-\lambda)X_{t_i}+\lambda X_{t_{i+1}} )-2 f \Big( \frac{X_{t_{i+1}}+X_{t_i} }{2}\Big)\Big|\d \lambda\\
  &\leq  |\nabla f\|_{\mathcal{C}^{1+\beta}}|X|_{\mathcal{C}^{\beta'} }^{2+\beta}\sum_{[t_i,t_{i+1}]\in \Delta}  |t_{i+1}-t_i|^{\beta'(2+\beta)}\underset{|\Delta|\to 0}\longrightarrow 0.\qedhere
  \end{align*}
\end{proof}
Lemma~\ref{le:strato} now follow from the fact that, along dyadic dissections $\Delta$, $I^1_\Delta(f)$ converges toward the Stratonovich integral $\int f(X)\circ \d X$.

As we have now concluded the proof of Theorem~\ref{th:green}, we now deduce Corollary~\ref{coro:green}.
\begin{proof}[Proof of Corollary~\ref{coro:green}]
  To keep the proof short, we treat the case when $X:[0,1]\to \mathbb{R}^2$ is a Brownian loop started from $0$. For the general case, we must also take into account the winding function of the triangle between $x$, $y$, and $X_{\frac{1}{2}}$, which is straightforward.

  Again it suffices to consider the case $f\geq 0$ and we can assume  $\int_{B(0,\|X\|_\infty)} f(z)\d z>0$.
  Let $X^1$ be the restriction of $X$ to $[0,\tfrac{1}{2}]$ , $X^2$ its restriction to $[\tfrac{1}{2},1]$, and $\hat{X}_2:t\in [0,\tfrac{1}{2}]\mapsto X_{1-t}$. Then, the distribution of $X^1$ (resp. $\hat{X}^2$) admits a density with respect to the density of a standard planar Brownian motion defined on $[0,\tfrac{1}{2}]$.
  Using scale invariance, we can apply Theorem~\ref{th:green} to both $X^1$ and $\hat{X}^2$.
  We deduce that, for $i\in \{1,2\}$, for all $\beta>0$, almost surely, for all $f\in \mathcal{C}^\beta(\mathbb{R}^2)$,
  \[ \int_{\mathbb{R}^2} \cut{N}{\n_{X^i}(z)} f(z)\d z \]
  converges as $N\to \infty$, and the limits are  almost surely equal to respectively $\int_{X^1} \eta + \int_{[X_{\frac{1}{2}},0]} \eta$ and  $\int_{X^2} \eta- \int_{[X_{\frac{1}{2}},0]} \eta$, where $\eta$ is such that $\partial_1\eta_2-\partial_2\eta_1=f$.

  It only remains to prove that almost surely, for all $f\in \mathcal{C}^{\beta}(\mathbb{R}^2)$,
  \begin{equation}
\label{eq:sum}
  \dashint \n_{X^1}\d \lambda+\dashint \n_{X^2}f\d \lambda=\dashint \n_X f \d \lambda,
  \end{equation} for which we proceed as in Lemma~\ref{le:line}, introducing again a random point $P$. Going through the same arguments as in the proof of Lemma~\ref{le:line}, we see that it suffices to show that there exists $\delta>0$ such that $X$-almost surely, for $\varepsilon_1,\varepsilon_1\in \{\pm 1\}$,
  \begin{equation} \label{eq:temp1}|\mathcal{D}^{X^1}_{\varepsilon_1 N}\cap \mathcal{D}^{X^2}_{\varepsilon_2 N}|=o(N^{-1-\delta}).\end{equation}
Be careful that the probability distribution of the pair $(X^1,\hat{X}^2)$ is \emph{not} absolutely continuous with respect to that of a independent Brownian motion, as they are correlated via both their endpoints. Thus, we cannot directly rely on  Corollary~\ref{coro:joint}. Instead, we further decompose $X^1$ and $X^2$ by setting, for $i\in \{1,2,3,4\}$, $Y^i$ the restriction of $X$ to the interval $[\tfrac{i-1}{4},\tfrac{i}{4}]$, and $\hat{Y}^i$ the time-reversal.
  Then, $\mathcal{D}^{X^i}_{\pm N}\subseteq \mathcal{D}^{Y^{2i-1} }_{\pm N'}\cup \mathcal{D}^{Y^{2i}}_{\pm N'} $ where $N'=\lceil N/2\rceil$.

  Now, for any $i<j$, the distribution of $(\hat{Y}^{i},Y^{j})$ (up to the approriate recentering) is absolutely continuous with respect to that of a pair of Brownian motions (with the second one started from a random location, when $j\neq i+1$). We can thus apply  Corollary~\ref{coro:joint} to these processes and deduce indeed that~\eqref{eq:temp1} holds, which allows to apply Lemma~\ref{le:line} and conclude that~\eqref{eq:sum} holds.
\end{proof}

\section{Magnetic impurities}

In this section, we fix a function $g\in \mathcal{C}^\beta_2(\mathbb{R}^2)$. For all $\rho>0$, we define $\mathcal{P}_\rho$ a Poisson process on $\mathbb{R}^2$ with intensity $\rho g(z) \d z$, independent from $X$, and  $\Gamma:[0,T]\to \mathbb{R}$ a standard Cauchy process, independent from $X$. We write $\mathbb{E}^\mathcal{P}$
the expectation with respect to $\mathcal{P}_\rho$, $\mathbb{E}^X$ the one with respect to $X$, $\mathbb{E}^\Gamma$ the expectation with respect to $\Gamma$ and $\mathbb{E}=\mathbb{E}^X\otimes \mathbb{E}^\mathcal{P}\otimes\mathbb{E}^{\Gamma}$ the expectation on the product space (although none of the variables we consider depend on both $\mathcal{P}$ and $\Gamma$, so truly $\mathbb{E}=\mathbb{E}^X\otimes \mathbb{E}^\mathcal{P}$ or $\mathbb{E}=\mathbb{E}^X\otimes \mathbb{E}^\Gamma$, whichever is relevant).


For a function $f\in \mathcal{C}_2^\beta(\mathbb{R}^2)$, we define
\[
\xi_\rho(f)\coloneqq \frac{1}{\rho}\sum_{z\in \mathcal{P}_\rho} f(z) \n_X(z), \]
as well as
\[
\xi(f)\coloneqq \dashint \n_X  (fg) \d \lambda+ \frac{1}{2}\int_0^1 (f g)(X_t) \d \Gamma_t.
\]
Remark since $X$ and $\Gamma$ are independent, the last integral is not really a stochastic one, in the sense that, $X$-almost surely, it is the integral of a deterministic continuous function with respect to $\Gamma$.
\begin{lemma}
  \label{le:param}
  Let $f\in \mathcal{C}([0,1],\mathbb{R})$. Then, $\int_0^1 f(t) \d \Gamma_t$ is a centred Cauchy random variable with scale parameter $\int |f(t)|\d t $.
\end{lemma}
\begin{proof}
  Consider instead the case when $f=\sum_{i=1}^k f_i \mathbbm{1}_{(t_{i-1},t_{i}]}$, for $t_0=0<t_1<\dots <t_k=1$ and $f_1,\dots f_k\in \mathbb{R}$. Then,
  $\int_0^1 f(t) \d \Gamma_t = \sum_{i=1}^k f_i (\Gamma_{t_i}-\Gamma(t_{i-1})$. Since $( \Gamma_{t_i}-\Gamma(t_{i-1}) )_{i \in \{1,\dots, k\}} $ is an independent family of centered Cauchy random variables with parameters  $(t_i -t_{i-1} )_{i \in \{1,\dots, k\}} $, we deduce
  $\int_0^1 f(t) \d \Gamma_t $ is a centered Cauchy random variable with scale parameter $\sum_{i=1}^k |f_i|(t_i -t_{i-1} )$. By density this extends to any càglàd function $f$, in particular to
  $f\in \mathcal{C}([0,1],\mathbb{R})$.
\end{proof}


The main result from this section is the following
\begin{lemma}
  \label{le:magnetic}
  Let $f,g\in \mathcal{C}^\beta(\R^2)$ bounded. Assume that $g$ takes non-negative values. Let
  \[G_{\alpha,f,g}\coloneqq\sum_{k\neq 0} \int_{\mathcal{A}_k} (e^{i   \alpha k f(z)}-1) g(z)\d z.\]

  Then, $X$-almost surely, as $\alpha\to 0$,
  \begin{equation}
  G_{\alpha,f,g}\underset{\alpha \to 0}= i\alpha\dashint \n_Xfg\d \lambda  - \frac{|\alpha|}{2}   \int_0^1 |f(X_t)| g(X_t)\d t +o(\alpha).
  \label{eq:Gasymp}
  \end{equation}
\end{lemma}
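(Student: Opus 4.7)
My plan is to split $G_{\beta,f,g} = R_\beta + iI_\beta$ into real and imaginary parts,
\[
R_\beta = \sum_{k\neq 0}\int_{\mathcal{A}_k}(\cos(k\beta f(z))-1)g(z)\,dz,\quad I_\beta = \sum_{k\neq 0}\int_{\mathcal{A}_k}\sin(k\beta f(z))g(z)\,dz,
\]
and show separately that $I_\beta = \beta\dashint\n_X fg\,dz + o(\beta)$ and $R_\beta = -\frac{|\beta|}{2}\int_0^1|f(X_t)|g(X_t)\,dt + o(\beta)$.

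For the imaginary part, I would write $\sin(k\beta f) = k\beta f + (\sin(k\beta f) - k\beta f)$. The linear piece, truncated at $|k|\leq K$, equals $\beta\int[\n_X]_K fg\,dz$, which converges to $\beta\,\dashint\n_X fg$ as $K\to\infty$ by Lemma~\ref{le:limitExists}. For the remainder I exploit the oddness of $u\mapsto\sin u - u$ to pair $k$ with $-k$, rewriting it as integrals of $(\sin(k\beta f)-k\beta f)g$ against the signed measure $\mathbbm{1}_{\mathcal{A}_k}-\mathbbm{1}_{\mathcal{A}_{-k}}$. By Lemma~\ref{le:bound}, this measure acts on Hölder $\varphi$ with $|\varphi(\mathcal{A}_k)-\varphi(\mathcal{A}_{-k})|\leq Ck^{-1-\delta}\|\varphi\|_{\mathcal{C}_b^\epsilon}$; combined with the pointwise bounds $|\sin u - u|\leq\min(|u|^3/6,\,2|u|)$ and careful control of Hölder norms split at $|k|\sim\beta^{-1}$, this remainder is $o(\beta)$.

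For the real part a naive Taylor expansion is insufficient, since the coefficient $\sum_k k^2\int_{\mathcal{A}_k}f^2 g$ diverges --- this divergence is exactly the source of the $|\beta|$ term. My strategy is to dyadically localize $X$: for $X^i$ the restriction to $[i2^{-n},(i+1)2^{-n}]$, approximate $R_\beta\approx\sum_i R_\beta^{X^i}$ with $R_\beta^{X^i}=\sum_k\int_{\mathcal{A}_k^{X^i}}(\cos(k\beta f)-1)g\,dz$. On each piece, $f$ and $g$ are close to their values at $X_{i2^{-n}}$ (oscillation $\lesssim 2^{-n\alpha\epsilon}$), and by Brownian scaling and Lemma~\ref{le:bound} applied to the rescaled piece, $|\mathcal{A}_k^{X^i}|\sim 2^{-n}/(2\pi k^2)$. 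The key ingredient is the Fourier identity
\[
\sum_{k\neq 0}\frac{\cos(kx)-1}{k^2} = -\pi|x|+\frac{x^2}{2}\qquad(|x|\leq 2\pi),
\]
applied at $x = \beta f(X_{i2^{-n}})$, yielding
\[
R_\beta^{X^i}\approx -\frac{|\beta|}{2}\cdot 2^{-n}|f(X_{i2^{-n}})|g(X_{i2^{-n}})+O(\beta^2 2^{-n}),
\]
and summing over $i$ produces a Riemann sum converging to $-\frac{|\beta|}{2}\int_0^1|f(X_t)|g(X_t)\,dt$.

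The main obstacle is coordinating the limits $\beta\to 0$ and $n\to\infty$ while keeping the decomposition error $R_\beta - \sum_i R_\beta^{X^i}$ at order $o(\beta)$. This error has three sources: the piecewise-linear correction $\n_{X^{(n)}}$ (bounded by $2^n$, contributing $O(2^n\beta^2)$ via Taylor), overlap terms $|\mathcal{A}^{X^i}_k\cap\mathcal{A}^{X^j}_k|=O(k^{-2})$ controlled by Lemma~\ref{le:joint}, and the oscillation of $f,g$ within each piece. The tension between $n$ small (to control the piecewise-linear error) and $n$ large (for the Cauchy-tail asymptotic to be accurate inside each piece) is the technical heart of the proof; resolving it likely requires choosing $n=n(\beta)\to\infty$ sub-logarithmically, or employing a finer multi-scale truncation in which the sum over $k$ is handled at scales matched to the piece size.
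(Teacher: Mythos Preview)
Your approach departs substantially from the paper's, and while the core analytic identity you invoke is equivalent to theirs, the surrounding architecture has genuine gaps.

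The paper neither splits into real and imaginary parts nor decomposes the path dyadically. It first performs an Abel summation on the full complex sum to write $G_{\beta,f,g}=\sum_{k\geq 1}(\phi_{k,\beta}(\mathcal{D}_k)+\phi_{-k,\beta}(\mathcal{D}_{-k}))$ with $\phi_{k,\beta}=e^{i\beta kf}(1-e^{-\operatorname{sgn}(k)i\beta f})g$, and then splits the range of $k$ into a bulk $k\leq N_1$, a tail $N_1<k\leq N_2$ (with $N_1\to\infty$, $\beta N_2\to\infty$, $\beta N_2^{1-\delta}\to 0$), and an end $k>N_2$. The bulk yields $i\beta\,\dashint\n_X fg$ by Taylor expansion. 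In the tail, Lemma~\ref{le:bound} is applied \emph{directly to the whole curve} to replace each $\phi_{\pm k,\beta}(\mathcal{D}_{\pm k})$ by $\tfrac{1}{2\pi k}\int_0^1\phi_{\pm k,\beta}(X_u)\,du$; the computation then reduces to $\sum_k\tfrac{\sin(k\beta f(X_u))}{k}$ and the identity $\int_0^\infty\tfrac{\sin(sx)}{s}\,ds=\operatorname{sgn}(x)\tfrac{\pi}{2}$, which is the continuous analogue of your cosine Fourier series. The end is handled by a block-cancellation argument. No decomposition of $X$ enters: Lemma~\ref{le:bound} already delivers the occupation measure globally.

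Your imaginary-part argument has a concrete summability gap. After subtracting the linear term, the remainder $\psi_k=(\sin(k\beta f)-k\beta f)g$ has $\|\psi_k\|_{\mathcal{C}_b^\epsilon}$ of order $k\beta$ for large $k$ (the Lipschitz constant of $u\mapsto\sin u-u$ is $2$), so $|\psi_k(\mathcal{A}_k)-\psi_k(\mathcal{A}_{-k})|\lesssim k^{-1-\delta'}\|\psi_k\|_{\mathcal{C}_b^\epsilon}\lesssim\beta k^{-\delta'}$ is not summable. Even for $k\leq\beta^{-1}$ the cubic bound only gives $\sum_{k\leq\beta^{-1}}(k\beta)^3 k^{-1-\delta'}\sim\beta^{\delta'}$, which is $o(1)$ but not $o(\beta)$. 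The point is that by oddness the occupation-measure main terms for $\psi_k$ cancel between $\mathcal{D}_k$ and $\mathcal{D}_{-k}$, so the entire contribution sits in the error term of Lemma~\ref{le:bound}, and that error is not small enough. You would need precisely the three-regime split the paper uses, at which point separating the linear part first is no longer helpful.

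Your real-part argument has a second gap, which you yourself flag as the crux. The decomposition $R_\beta\approx\sum_i R_\beta^{X^i}$ is a \emph{nonlinear} splitting: since $\n_X=\sum_i\n_{X^i}+\n_{X^{(n)}}$ but $\cos$ is not additive, the discrepancy is not governed merely by $|\mathcal{D}_N^{X^i}\cap\mathcal{D}_N^{X^j}|$. On regions where several $\n_{X^i}$ are simultaneously nonzero, or where $\n_{X^{(n)}}$ interferes with a large $\n_{X^i}$, the integrand difference $(\cos(\n_X\beta f)-1)-\sum_i(\cos(\n_{X^i}\beta f)-1)$ has no evident smallness, and Lemma~\ref{le:joint} controls areas of intersections, not these oscillatory cross-terms. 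The paper sidesteps this entirely: because Lemma~\ref{le:bound} already gives $\varphi(\mathcal{D}_k)\approx\tfrac{1}{2\pi k}\int_0^1\varphi(X_u)\,du$ for the full curve, the ``freezing of $f,g$ on small pieces'' that motivates your decomposition is unnecessary --- the occupation measure performs it automatically.
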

Before we dive into the proof of this lemma, we first explain why this implies both Theorem~\ref{th:magne} and Corollary~\ref{coro:magne}.

\begin{proof}[Lemma~\ref{le:magnetic} implies Theorem~\ref{th:magne}]
  Since the function $\min ( |\n_X\cdot f|,1)$ is integrable against the intensity measure $\rho g \d z$ of $\mathcal{P}_\rho$, we can use Campbell's theorem, which gives
  \[
  \mathbb{E}^{\mathcal{P}}[ e^{i\theta \xi_\rho(f)} ]
  =
  \exp\Big( \sum_{k\neq 0} \int_{\mathcal{A}_k} (e^{i k\frac{\theta}{\rho} f(z)}-1) \rho g(z)\d z \Big)=\exp( \rho \ G_{\frac{\theta}{\rho},f,g}).
  \]
  Besides, by Lemma~\ref{le:param}, conditional on $X$, $\int_0^1 f(X_t) g(X_t)\d \Gamma_t$ is a centered Cauchy random variable with scale parameter $\int_0^1 |f(X_t)|g(X_t)\d t$, whilst $\dashint \n_Xfg\d \lambda$ is deterministic. It follows that
  \[\mathbb{E}^{\Gamma }[ e^{i\alpha \xi(f)} ]=e^{i\alpha  \dashint \n_Xfg\d \lambda  - \frac{|\alpha|}{2} \int_0^1 |f(X_t)|g(X_t)\d t } ,
   \]
   By taking $\alpha=\theta/\rho$, Lemma~\ref{le:magnetic} ensures that, $X$-almost surely, for all $\theta\in \mathbb{R}$,
   \[
   \mathbb{E}^{\mathcal{P}}[ e^{i\theta \xi_\rho(f)} ]
=
   \exp(\rho\ G_{\theta/\rho,f,g})\underset{\rho\to \infty}\longrightarrow \exp(i\theta   \dashint \n_Xfg\d \rho  - \frac{|\theta|}{2} \int_0^1 |f(X_t)|g(X_t)\d t   )
=\mathbb{E}^{\Gamma }[ e^{i\theta \xi(f)} ]
   ,\]
which is Theorem~\ref{th:magne}.
\end{proof}
\begin{proof}[Theorem~\ref{th:magne} implies  Corollary~\ref{coro:magne}]
Since both  $\xi_\rho(f)$ and $\xi(f)$ are linear in $f$, one can use the Cram\'er-Wold device to deduce Corollary~\ref{coro:magne} from its special case $n=1$. By Lévy's continuity theorem, this specific case is equivalent to the statement that $X$-almost surely, for all $\theta\in \mathbb{R}$,
   \[\mathbb{E}^{\mathcal{P}}[ e^{i\theta \xi_\rho(f)} ]\underset{\rho\to \infty}{\longrightarrow} \mathbb{E}^{ \Gamma}[ e^{i\theta \xi(f)} ]. \qedhere\]
\end{proof}

\begin{proof}[Proof of Lemma~\ref{le:magnetic}]
For the entirety of the proof, the realisation of $X$ is fixed, and all the constants are random.   From symmetry, we can assume $\alpha>0$.
Recall  \[G_{\alpha,f,g}\coloneqq\sum_{k\neq 0} \int_{\mathcal{A}_k} (e^{i   \alpha k f(z)}-1) g(z)\d z,\]
where the sum is absolutely convergent since $g$ is bounded, $\bigcup_{k\neq 0} \mathcal{A}_k$ is bounded, and $|e^{i  k  \alpha f(z)}-1|$ is bounded by $
2$ and vanishes on $\mathcal{A}_0$.

Consider
$G^n_{\alpha,f,g}\coloneqq\sum_{k=1}^n \int_{\mathcal{A}_k} (e^{i   \alpha k f(z)}-1) g(z)\d z$. We intervert the finite sum with the integral,
\[G^n_{\alpha,f,g}= \int_{\mathbb{R}^2}\sum_{k=0}^n \mathbbm{1}_{\mathcal{A}_k}(z) (e^{i   \alpha k f(z)}-1) g(z)\d z
\]
(the extra term for $k=0$ vanishes).

By summation by part (with $a_k\coloneqq e^{i   \alpha k f(z)}-1$, $b_k\coloneqq \mathbbm{1}_{\mathcal{D}_k}$), for any given $z$,
\begin{align*}
\sum_{k=0}^n (e^{i   \alpha k f(z)}-1)\mathbbm{1}_{\mathcal{A}_k}(z)
&=(1-e^{i\alpha (n+1)f(z)} )\mathbbm{1}_{D_{n+1}}(z)+\sum_{k=0}^n \mathbbm{1}_{\mathcal{D}_{k+1}}(z) (e^{i   \alpha (k+1) f(z)}-e^{i   \alpha k f(z)})\\
&=(1-e^{i\alpha (n+1)f(z)} )\mathbbm{1}_{D_{n+1}}(z)+\sum_{k=1}^{n+1} \mathbbm{1}_{\mathcal{D}_{k}}(z) e^{i   \alpha k f(z)} (1-e^{- i   \alpha f(z)})
\end{align*}
Inverting back the sum with the integral, we get
\[G^n_{\alpha,f,g}=
\int_{D_{n+1}} (1-e^{i\alpha (n+1)f} ) g \d \lambda+\sum_{k=1}^{n+1} \int_{\mathcal{D}_{k}}  e^{i   \alpha k f}  (1-e^{-i   \alpha  f})g \d \lambda.\]
Taking the limit $n\to \infty$, we deduce
\[\sum_{k=1}^\infty \int_{\mathcal{A}_k} (e^{i   \alpha k f}-1) g\d \lambda=
\sum_{k=1}^{\infty} \int_{\mathcal{D}_{k}}  e^{i   \alpha k f}  (1-e^{-i   \alpha  f})g \d \lambda,\]
where the sum on the left-hand side is absolutely convergent but the one on the right-hand side is convergent but may not be absolutely convergent (and in fact, it is not, when for example $f=g=1$).

With similar computation for $k<0$, we deduce
  \begin{align*}
  G_{\alpha,f,g}
  &= \sum_{k=1}^\infty \Big( \int_{\mathcal{D}_k} e^{i\alpha k f} (1-e^{-i\alpha f}) g \d \lambda+ \int_{\mathcal{D}_{-k}} e^{-i\alpha k f} (1-e^{i\alpha f}) g \d \lambda\Big)\\
  &= \sum_{k=1}^\infty (\phi_{k,\alpha}(\mathcal{D}_k )+\phi_{-k,\alpha}(\mathcal{D}_{-k})),
  \end{align*}
  where \[\phi_{k,\alpha}\coloneqq e^{i\alpha k f} (1-e^{ -i \sgn(k) \alpha f}) g.\]

  The two terms in the limit~\eqref{eq:Gasymp} come from two different parts in this last sum: the term $i\alpha \dashint \n_Xfg\d \lambda$ comes from the \emph{bulk} of the sum, that is the part with $k$ of the order of $1$. The second term comes from the \emph{tail} of the sum, or more precisely from the part of the sum when $k$ is of the order of $\alpha^{-1}$.
  We will split the sum into several parts. For $n\in \mathbb{N},\ N\in \mathbb{N}\cup \{\infty\}$ with $n<N$, set
  \[ G_{\alpha,f,g}^{n,N}=\sum_{k=n+1}^N (\phi_{k,\alpha}(\mathcal{D}_k )+\phi_{-k,\alpha}(\mathcal{D}_{-k})).\]

  For random $N_1=N_1(\alpha)$ and $N_2=N_2(\alpha)>N_1(\alpha)$ which will be set later on, we decompose $G_{\alpha,f,g}$
  into
  three parts,
  \[ G_{\alpha,f,g}= \underbrace{G_{\alpha,f,g}^{0,N_1} }_{\text{bulk}}  +\underbrace{G_{\alpha,f,g}^{N_1,N_2}}_{\text{tail}} +\underbrace{G_{\alpha,f,g}^{N_2,\infty}}_{\text{end}}.
  \]
  As $\alpha\to 0$, both $N_1$ and $\alpha N_2$ will slowly diverge toward $\infty$. In particular, $1\ll N_1\ll \alpha^{-1} \ll N_2$.
  The reason why we need to treat the \emph{end} part in a separate way is that its convergence toward $0$ is not \emph{absolute}, i.e.
  \[
  \sum_{k=N_2+1}^\infty |\phi_{k,\alpha}(\mathcal{D}_k )+\phi_{-k,\alpha}(\mathcal{D}_{-k})|
  \]
  does \textbf{not} converge toward zero as $\alpha\to 0$, and one must be careful when dealing with this term. The general term (without the absolute values) slowly oscillates between positive and negative values, and we must take advantage of sinusoidal oscillations and compensations.

We now estimate the \emph{bulk} part. For fixed $k\neq 0$, as $\alpha\to 0$,  uniformly in $z$,
  \[ \phi_{k,\alpha}(z)= \sgn(k) i \alpha f(z) g(z)+O(\alpha^2),\]
  and it follows that
  \[
  \phi_{k,\alpha}(\mathcal{D}_k)+\phi_{-k,\alpha}(\mathcal{D}_{-k})
  = i\alpha ( (fg)(\mathcal{D}_k)-(fg)(\mathcal{D}_{-k}) )+O(\alpha^2).
  \]
Let $C_k$ be such that for all $\alpha\in (0,1)$,
  \[ |   \phi_{k,\alpha}(\mathcal{D}_k)+\phi_{-k,\alpha}(\mathcal{D}_{-k}) - i\alpha ( (fg)(\mathcal{D}_k)-(fg)(\mathcal{D}_{-k}) )|\leq C_k \alpha^2, \]
  and set $N_1(\alpha)\coloneqq \min( \lfloor \alpha^{-\frac{1}{3}}\rfloor , \sup \{ N: \forall k\leq N, C_k\leq \alpha^{-\frac{1}{3}} \}) $.

  Then,
  \[ \Big| G_{\alpha,f,g}^{0,N}- i\alpha \sum_{k=1}^{N_1}   ( (fg)(\mathcal{D}_k)-(fg)(\mathcal{D}_{-k}) )\Big| \leq \sum_{k=1}^{N_1} C_k \alpha^2\leq \alpha^{\frac{4}{3}}=o(\alpha).\]
  Besides, $X$-almost surely, $N_1\underset{\alpha \to 0}\longrightarrow +\infty$, and  Theorem~\ref{th:green} implies that
  \[ \sum_{k=1}^{N_1}   ( (fg)(\mathcal{D}_k)-(fg)(\mathcal{D}_{-k}) )\underset{\alpha \to 0}\longrightarrow \dashint \n_Xfg\d \lambda.\]
  Therefore,
  \begin{equation}
  \label{eq:bulk}
  G_{\alpha,f,g}^{0,N}=i\alpha\dashint \n_Xfg\d \lambda +o(\alpha).
  \end{equation}

  We now look at the \emph{tail} part of $G_{\alpha,f,g}$.
  Let $\delta>0$ and $C$ (random) be such that for all $k\neq 0$ and $\phi\in \mathcal{C}^\beta_2(\mathbb{R}^2)$,
  \[ \Big| \phi(\mathcal{D}_k)-\frac{1}{2\pi |k|}\int_0^1 \phi(X_u) \d u\Big|\leq C \|\phi\|_{\mathcal{C}_\infty^\beta} k^{-1-\delta}.\]
  Recall that the existence of such a couple $(\delta,C)$ is provided by Lemma~\ref{le:bound}.
  Let $N_2=N_2(\alpha)$ be any integer-valued function such that $\alpha N_2 \underset{\alpha \to 0}\longrightarrow +\infty $ and $ \alpha N_2^{1-\delta} \underset{\alpha \to 0}\longrightarrow 0$.

  For all $\phi,\psi\in \mathcal{C}^\beta_2(\mathbb{R}^2)$,
  $|\phi\psi|_{\mathcal{C}^\beta}\leq |\phi|_{\mathcal{C}^\beta}\|\psi\|_\infty+\|\phi\|_\infty |\psi|_{\mathcal{C}^\beta}$. We deduce that for all $k$ and $\alpha$,
  \begin{align*}
  \|\phi_{k,\alpha}\|_{\infty}&\leq \|e^{i\alpha k f}\|_\infty\|1-e^{i\alpha f}\|_{\infty}\|g\|_{\infty}\leq \alpha \|f\|_\infty \|g\|_{\infty}, \\
  |\phi_{k,\alpha}|_{\mathcal{C}^\beta}
  &\leq |e^{i\alpha k f}|_{\mathcal{C}^\beta} \|1-e^{i\alpha f}\|_{\infty}\|g\|_{\infty}+\|e^{i\alpha k f}\|_\infty |1-e^{i\alpha f}|_{\mathcal{C}^\beta}\|g\|_{\infty}+\|e^{i\alpha k f}\|_\infty\|1-e^{i\alpha f}\|_{\infty}|g|_{\mathcal{C}^\beta}\\
  &\leq k \alpha^2 |f|_{\mathcal{C}^\beta} \|f\|_\infty \|g\|_{\infty}+ \alpha |f|_{\mathcal{C}^\beta}\|g\|_{\infty}+\alpha \|f\|_\infty |g|_{\mathcal{C}^\beta},
  \end{align*}
  so that
  \[ \| \phi_{k,\alpha}\|_{\mathcal{C}^\beta_\infty}\leq \alpha(1+k\alpha )(1+\|f\|_{\mathcal{C}^\beta_\infty})\|f\|_{\mathcal{C}^\beta_\infty}\|g\|_{\mathcal{C}^\beta_\infty}.\]

  We deduce that, for all $k> 0$,
  \begin{multline*}
  \Big| \phi_{k,\alpha}(\mathcal{D}_k)+\phi_{-k,\alpha}(\mathcal{D}_{-k}) - \frac{1}{2\pi k} \int_0^1 (\phi_{k,\alpha}(X_u)+\phi_{-k,\alpha}(X_u)) \d u \Big|
  \\\leq 2 C(1+\|f\|_{\mathcal{C}^\beta_\infty})\|f\|_{\mathcal{C}^\beta_\infty}\|g\|_{\mathcal{C}^\beta_\infty}   \alpha(1+k\alpha ) k^{-1-\delta}.
  \end{multline*}
  Thus, there exist constants $C'=C'(f,g),\ C''=C''(f,g)$ such that for all $N_2\geq N_1$,
  \begin{align*}
  \Big|G_{\alpha,f,g}^{N_1,N_2}- &\frac{1}{2\pi }\sum_{k=N_1+1}^{N_2}  \frac{1}{k} \int_0^1 (\phi_{k,\alpha}(X_u)+\phi_{-k,\alpha}(X_u)) \d u\Big| \\
  &\leq C' \sum_{k=N_1+1}^{N_2} \alpha(1+k\alpha ) k^{-1-\delta}\leq C'' \alpha ( N_1^{-\delta}+ \alpha N_2^{1-\delta})=o(\alpha).
  \end{align*}
  Set  \[\psi_{k,\alpha}\coloneqq e^{i\alpha k f} \sgn(k) i\alpha f  g.\]
  Then, for $\alpha\leq \|f\|_\infty$,
  \begin{align*}
  \Big|\sum_{k=N_1+1}^{N_2} \frac{ \phi_{k,\alpha}- \psi_{k,\alpha} }{k}\Big|
  &=|g| \Big|\sum_{k=N_1+1}^{N_2}\frac{1}{k}e^{i\alpha k f} (1-e^{-\sgn(k)i \alpha f }-\sgn(k)i\alpha f ) \Big|\\
  &\leq |g| \sum_{k=N_1+1}^{N_2} \frac{1}{k} \frac{\alpha^2 f^2}{2}\leq C_{f,g} |\log(\alpha)| \alpha^2=o(\alpha).
  \end{align*}
  It follows that
  \begin{align*}
  G_{\alpha,f,g}^{N_1,N_2}&= \frac{1}{2\pi }\sum_{k=N_1+1}^{N_2}  \frac{1}{k} \int_0^1 (\psi_{k,\alpha}(X_u)+\psi_{-k,\alpha}(X_u)) \d u+o(\alpha)\\
  &= -\frac{\alpha}{\pi }\sum_{k=N_1+1}^{N_2} \int_0^1  f(X_u)g(X_u) \frac{\sin( k\alpha f(X_u))}{k} \d u+o(\alpha)\\
  &= -\frac{\alpha}{\pi }\sum_{k=1}^{N_2} \int_0^1  f(X_u)g(X_u) \frac{\sin( k\alpha f(X_u))}{k} \d u+o(\alpha).
  \end{align*}
  The last line follows from the fact that
  \[ \Big|\frac{\alpha}{\pi }\sum_{k=1}^{N_1} \int_0^1  f(X_u)g(X_u) \frac{\sin( k\alpha f(X_u))}{k} \d u \Big| \leq \|f\|_\infty^2\|g\|_\infty \alpha^2 N_1=o(\alpha). \]

  For $s\leq 0$, let \[\Phi(s)=\left\{ \begin{array}{ll} \int_0^1 f(X_u)g(X_u)\frac{\sin(sf(X_u))}{s} \d u & \mbox{for } s\neq 0 \\
  \int_0^1 f(X_u)^2g(X_u)\d u & \mbox{for } s= 0,
  \end{array} \right. \]
  so that $\Phi$ is continuous on $[0,\infty)$ and
  \begin{equation}
  \label{eq:equiv1}
  G_{\alpha,f,g}^{N_1,N_2}=-\frac{\alpha^2}{\pi }\sum_{k=1}^{N_2} \Phi( \alpha k)+o(\alpha).
  \end{equation}

  For all $R>0$,
  \[ \Big|\alpha \sum_{k=1}^{ \lfloor R \alpha^{-1}\rfloor  } \Phi(\alpha k)-\int_0^R \Phi(s)\d s \Big|\leq \alpha \|f\|_\infty^2 \|g\|_\infty+ \omega_{\Phi,[0,R]}(\alpha),\]
  where $\omega_{\Phi,[0,R]}(\alpha)\coloneqq \sup_{s,t\in [0,R]} |\Phi(s)-\Phi(t)|$ is the continuity modulus of $\Phi$.

  Since $\omega_{\Phi,[0,R]}(\alpha)\underset{\alpha\to 0}\longrightarrow 0$ for all $R>0$, there exists a function $R_\alpha$ such that $R_\alpha \to \infty$ as $\alpha \to 0$ and
  $ \omega_{\Phi,[0,R_\alpha]}(\alpha)\underset{\alpha\to 0}\longrightarrow 0$. We fix such a function, and set $N_2(\alpha)\coloneqq \alpha^{-\frac{2}{2-\delta}} \wedge ( \alpha^{-1} R_\alpha)$. Remark then that $\alpha N_2 \underset{\alpha\to 0}\longrightarrow +\infty$ and $\alpha N_2^{1-\delta} \underset{\alpha\to 0}\longrightarrow 0$, as previously required.

  We obtain
  \begin{equation}
  \label{eq:equiv2}
  \Big|\alpha \sum_{k=1}^{ N_2 } \Phi(\alpha k)-\int_0^{\alpha^{-1}N_2}  \Phi(s)\d s \Big|=o(1).
  \end{equation}

  To estimate this last integral, there is two things we must be careful about. First, because of the $\sinc$ function in the definition of $\Phi$, the function $\Phi$ is not integrable on $[0,+\infty)$, so we cannot naively replace the bound $\alpha^{-1} N_2$ with its limit. Secondly, when manipulating the integral, we must be extra careful at the vicinity of $f(X_u)=0$.

  Recall that for $x\neq 0$, $\lim_{C\to \infty} \int_0^C \frac{\sin(sx)}{s}\d s=\sgn(x)\frac{\pi}{2}$.
  Performing an integration by part, we deduce that for all $x$ and $C> 0$,
  \begin{align*}
  \Big| \int_0^C \frac{\sin(sx)}{s} \d s-\sgn(x)\frac{\pi}{2}\Big|
  &=\Big|\lim_{C'\to \infty} \int_C^{C'} \frac{\sin(sx)}{s} \d s\Big|\\
  &=\Big| \frac{\cos(C x)}{Cx}- \lim_{C'\to \infty} \int_C^{C'}  \frac{\cos(sx)}{s^2x}\d s\Big| \\
  &\leq \frac{2}{C|x|}.
  \end{align*}
  It follows that
  \begin{align}
  \Big|\int_0^{\alpha^{-1}N_2} \Phi(s)\d s &-\frac{\pi}{2} \int_0^1 |f(X_u)|g(X_u)\d u\Big|\nonumber\\
  &=\Big|  \int_0^1 f(X_u)g(X_u) \Big(  \int_0^{\alpha^{-1}N_2} \frac{\sin(sf(X_u))}{s} \d s-\sgn(f(X_u))\frac{\pi}{2}     \Big)   \d u\Big|\nonumber\\
  &\leq \int_0^1 |f(X_u)|g(X_u) \frac{2}{{\alpha^{-1}N_2}|f(X_u)|} \d u\nonumber\\
  &=O(\alpha N_2^{-1})=o(1).\label{eq:equiv3}
  \end{align}

  Combining~\eqref{eq:equiv1},~\eqref{eq:equiv2} and~\eqref{eq:equiv3}, we obtain
  \begin{equation}
  \label{eq:tail}
  G^{N_1,N_2}_{\alpha,f,g}=-\frac{\alpha}{2}\int_0^1 |f(X_u)|g(X_u)\d u+o(\alpha).
  \end{equation}


  We finally look at the \emph{end} part of $G_{\alpha,f,g}$. Since the $\mathcal{C}^\beta$ norm of $\phi_{k,\alpha}$ becomes arbitrarily large as $k$ goes to infinity, one cannot directly rely on Lemma~\ref{le:bound}.
  %
  For a positive integer $j$, we decompose $G^{j^2N_2,(j+1)^2N_2}_{\alpha,f,g}$ into
  \begin{align*}
  G^{j^2N_2,(j+1)^2N_2}_{\alpha,f,g}&= \underbrace{\sum_{k=j^2N_2+1 }^{(j+1)^2N_2} (\phi_{k,\alpha}(\mathcal{D}_{(j+1)^2N_2} )-\phi_{-k,\alpha}(\mathcal{D}_{-(j+1)^2N_2}))}_{\eqqcolon H^{j}_{\alpha,f,g} } \\
  &\hspace{1cm}+  \underbrace{\sum_{k=j^2N_2+1 }^{(j+1)^2N_2} ( \phi_{k,\alpha}(\mathcal{D}_k)- \phi_{k,\alpha}(\mathcal{D}_{(j+1)^2N_2} )- \phi_{k,\alpha}(\mathcal{D}_{-k})+\phi_{-k,\alpha}(\mathcal{D}_{-(j+1)^2N_2})}_{\eqqcolon K^{j}_{\alpha,f,g}} .
  \end{align*}
The idea here is that, in the considered range for $k$, $\mathcal{D}_k$ does not vary too much (which allows to control the second term $K^j_{\alpha,f,g}$), whilst $\phi_{k,\alpha}$ varies sufficiently, along a circle centered at $0$, to allow for cancellations to happen  (which allows to control the first term $H^j_{\alpha,f,g}$).

  Indeed, we have
  \begin{align*}
  \Big| \sum_{k=j^2N_2+1 }^{(j+1)^2N_2 } \phi_{k,\alpha}(\mathcal{D}_{(j+1)^2N_2} )\Big|
  &=\Big|\int_{\mathcal{D}_{(j+1)^2N_2}} \sum_{k=j^2N_2+1 }^{(j+1)^2N_2} e^{-i\alpha k f(z) }(1-e^{-i\alpha f(z)})g(z) \d z\Big| \\
  &=\Big| \int_{\mathcal{D}_{(j+1)^2N_2} } e^{-i\alpha (j^2N_2+1) f(z) } (1-e^{-i\alpha ((j+1)^2N_2-j^2N_2) f(z) }  )g(z) \d z\Big|\\
  &\leq \int_{\mathcal{D}_{(j+1)^2N_2} }  2 |g(z)| \d z\\
  &\leq 2 \|g\|_\infty D_{(j+1)^2N_2}.
  \end{align*}

  Using again Lemma~\ref{le:bound} with $f=1$, we deduce that almost surely, there exists $C$ such that for all $N$, $D_{N}\leq  \frac{C}{N}$.
  It follows that
  \[|H^{j}_{\alpha,f,g}|\leq \frac{4C \|g\|_\infty}{ (j+1)^2N_2 }, \]
  which yields
  \[
  \Big| \sum_{j=1}^\infty H^{j}_{\alpha,f,g} \Big|\leq \frac{ 4C \|g\|_\infty}{N_2}\sum_{j=2}^\infty \frac{1}{j^2}=o(\alpha).
  \]

  As for $K^{j}_{\alpha,f,g}$, using the fact that the sequences $(\mathcal{D}_k)_{k\geq 1}$ and $(\mathcal{D}_{-k})_{k\geq 1}$ are nested (that is, $\mathcal{D}_{k+1}\subseteq \mathcal{D}_k$ and $\mathcal{D}_{-k-1}\subseteq \mathcal{D}_{-k}$ for all $k>0$), we have
  \begin{align*}
  K^{j}_{\alpha,f,g}
  &=  \sum_{k=j^2N_2+1}^{(j+1)^2N_2} |\phi_{x,\alpha}| (D_k-D_{(j+1)^2N_2}+ D_{-k}-D_{-(j+1)^2N_2} )\\
  &\leq \sum_{k=j^2N_2+1}^{(j+1)^2N_2}  \alpha \|f\|_\infty \|g\|_\infty (D_k-D_{(j+1)^2N_2} + D_{-k}-D_{-(j+1)^2N_2}).
  \end{align*}
  Let $C,\delta>0$ such that for all $N\neq 0$,
  \[ \big|D_N-\frac{1}{2\pi |N|}\big|\leq C N^{-1-\delta}.\]
  Then, for all $k\in \{j^2N_2+1,\dots, (j+1)^2N_2\}$,
  \[0\leq  D_k-D_{(j+1)^2N_2}\leq \frac{1}{2\pi k}-\frac{1}{2\pi (j+1)^2N_2} +2C k^{-1-\delta}\leq C'\big( \frac{1}{j^3 N_2^2}+ (j^2N_2)^{-1-\delta}\big).\]
  We deduce
  \[|K^{j}_{\alpha,f,g}|\leq C''  \|f\|_\infty \|g\|_\infty N_2^{-1} j^{-2},\]
  and it follows that
  \[ \sum_{j=1}^\infty |K^{j}_{\alpha,f,g}|=o(\alpha).\]

  Finally, we have
  \begin{equation} \label{eq:end} |G^{N_2,\infty}_{\alpha,f,g}|\leq \sum_{j=1}^{\infty} |G^{j^2 N_2,(j+1)^2N_2}_{\alpha,f,g}|\leq \sum_{j=1}^{\infty} |K^{j}_{\alpha,f,g}|+\sum_{j=1}^{\infty} |H^{j}_{\alpha,f,g}|=o(\alpha).\end{equation}
  We conclude the proof by putting together~\eqref{eq:bulk},~\eqref{eq:tail} and~\eqref{eq:end}.
\end{proof}

\section{Funding}
I am pleased to acknowledge support from the ERC Advanced Grant 740900 (LogCorRM), and later from the EPSRC grant EP/W006227/1 .

\bibliographystyle{plain}
\bibliography{green_25_11_26.bbl}

\end{document}